\documentclass[11pt, letterpaper]{article}

\usepackage[utf8]{inputenc}
\usepackage[english]{babel}
\usepackage{amsmath, amssymb, amsthm}
\usepackage{geometry}
\usepackage{graphicx}
\usepackage{xcolor}
\usepackage{colortbl}
\usepackage{booktabs}
\usepackage{hyperref}
\usepackage{enumitem}

\usepackage[framemethod=tikz]{mdframed}

\mdfdefinestyle{estiloRemark}{
    hidealllines=true,
    leftline=true,
    linewidth=3pt,
    linecolor=black!70,
    backgroundcolor=black!4,
    innerleftmargin=12pt,
    innerrightmargin=12pt,
    innertopmargin=10pt,
    innerbottommargin=10pt,
    skipabove=15pt,
    skipbelow=15pt
}

\theoremstyle{definition}

\newmdtheoremenv[style=estiloRemark]{remark}{Remark}

\usepackage{tikz}
\usepackage{pgfplots}
\pgfplotsset{compat=1.17}
\usetikzlibrary{arrows.meta}
\usetikzlibrary{calc}
\usetikzlibrary{positioning}

\hypersetup{
    colorlinks=true,
    linkcolor=blue!70!black,
    citecolor=red!70!black,
    urlcolor=blue!70!black
}

\theoremstyle{plain}
\newtheorem{theorem}{Theorem}[section]
\newtheorem{lemma}[theorem]{Lemma}
\newtheorem{proposition}[theorem]{Proposition}

\theoremstyle{definition}
\newtheorem{definition}[theorem]{Definition}

\title{\vspace{-2cm}\textbf{A Discrete KKT Variational Characterization of the Local Minimality of the Mahler Volume in Centrally Symmetric Polytopes}}
\author{Daniel Martín Jiménez Cuevas \\ Facultad de Ciencias, UNAM \\ \texttt{abel3.1415@ciencias.unam.mx}}
\date{\today}

\begin{document}

\maketitle

\begin{abstract}
We present a discrete parametric characterization of the Mahler functional $\mathcal{V}_M$ for centrally symmetric polytopes in $\mathbb{R}^n$. By formulating the first variation of the volume with respect to the radial immersions of the vertices, we derive an exact KKT stationarity condition. Spectral analysis of the second variation shows that the radial Hessian matrix is analytically equivalent to a positive semi-definite discrete Graph Laplacian. Coupling this radial analysis with a combinatorial study of isovertex folding and vertex truncations in the local Hausdorff topology, we establish a quadratic quantitative stability bound against general polyhedral perturbations. This discrete framework avoids the degenerations of traditional continuous analysis and provides an explicit algebraic proof that the Hanner orbit constitutes a strict and topologically isolated local minimum in the space of centrally symmetric polytopes modulo $GL(n,\mathbb{R})$.
\end{abstract}

\section{Introduction}
\label{sec:intro}

Mahler's conjecture \cite{Mahler1939} constitutes one of the central problems in asymptotic convex geometry. It states that for any centrally symmetric convex body $K \subset \mathbb{R}^n$ ($K \in \mathcal{K}_0^n$), the product of reciprocal volumes, or Mahler functional $\mathcal{V}_M(K) = \mathrm{vol}(K) \cdot \mathrm{vol}(K^\circ)$, is bounded below by the constant $4^n/n!$. The Hanner orbit is established as the natural candidate for the geometric equality cases of this invariant \cite{Hanner1956}, a property subsequently characterized in the polyhedral regime by Reisner \cite{Reisner1986}.

Historically, research into this infimum has relied on continuous functional methods. For instance, the fact that the hypercube constitutes a strict and topologically isolated local minimum was established through the study of infinitesimal perturbations in Banach spaces \cite{NazarovEtAl2010}, and the global resolution of the three-dimensional case required profound techniques from symplectic geometry \cite{IriyehShibata2020}. In contrast, the present work introduces a \textbf{discrete parametric characterization} sustained by the global metric of the space of polytopes modulo $GL(n, \mathbb{R})$. Instead of assuming smooth boundaries ($C^\infty$) or resorting to functional integration, we approach the minimization of the functional by analyzing the deformation topology of the combinatorial vertices.

To formalize the derivation of volumetric variations under polyhedral perturbations, we ground our differential calculus in Brunn-Minkowski geometric theory and the differentiability of support functions \cite{Schneider2014}. On this basis, this article introduces a methodological framework based on four main contributions:

\begin{enumerate}
    \item \textbf{KKT Stationarity Condition for Polar Duality:} We formulate an optimization system evaluated over the radial degrees of freedom of the polytope. We demonstrate that the geometric stationarity of the Mahler functional requires an exact pointwise identity between the primal transverse gradient ($R_i^2 C_i$) and the Euclidean measure of the dual facet ($S_i^\circ$).
    \item \textbf{Dimensional Arithmetic Transition:} We analyze the asymptotic behavior of the stationary functional throughout $\mathbb{R}^n$. We formally identify the existence of a transitional volumetric peak for the hypercube-orthoplex pair in dimensions $n=3$ and $n=4$, prior to the asymptotic collapse dictated by the factorial operator for all $n \ge 5$.
    \item \textbf{Algebraic Identity of Polyhedral Equilibrium:} We explicitly demonstrate that, when evaluating the KKT stationarity condition for the generalized hypercube $\mathcal{C}_n$, Euler's Theorem and the Cayley-Menger metric induce a perfect analytical cancellation. This reduces the coupled system to an algebraic identity that scales universally with the dimension.
    \item \textbf{Quantitative Stability through Combinatorial Mutations:} We establish strict local stability by evaluating structural perturbations in the local Hausdorff metric. We prove that local geometric alterations—specifically isovertex folding and vertex truncations—induce an asymptotic linear growth ($\mathcal{O}(\epsilon)$) in the Mahler functional that rigidly dominates any higher-order volumetric contractions.
\end{enumerate}

As a consequence of this combinatorial analysis and the spectral evaluation of the discrete Hessian matrix, we prove that any polyhedral perturbation in the neighborhood of the hypercube $\mathcal{C}_n$ strictly increases the functional $\mathcal{V}_M$. Furthermore, we demonstrate that this configuration possesses strict quadratic stability in the space of polyhedral deformations modulo $GL(n, \mathbb{R})$, confirming analytically that it operates as a strict and topologically isolated local minimum against the global regime bounded by Bourgain and Milman \cite{BourgainMilman1987}.

The structure of the manuscript is as follows: Section \ref{sec:variational} derives the KKT stationarity equations from discrete variational principles. Section \ref{sec:hessian} evaluates the transverse Hessian matrix to confirm local positive definiteness. Section \ref{sec:dimensional_synthesis} synthesizes the evaluation of this condition across low dimensions, formally identifying the fixed point of the dimensional transition. Section \ref{sec:Rn} generalizes the exact algebraic identity for all dimensions $n$. Finally, Section \ref{sec:truncation} proves the Truncation Lemma and consolidates the main theorem of quantitative minimality.

\subsection{Relationship with classical and recent literature}

Analysis of Mahler's Conjecture has transitioned from functional integration to convex approximation techniques. It is pertinent to contextualize our polyhedral formulation against the dominant methods in contemporary literature:

\textbf{1. Interior integration vs. discrete stationarity:}
In probabilistic approaches, such as those studied by Kuperberg \cite{kuperberg2008}, the functional is analyzed via the mathematical expectation over the interior domains: $\mathbb{E}_{K \times K^\circ}[\langle x, y \rangle^2]$. This integral formulation regularizes the geometry of the body, diluting the analytical weight of combinatorial singularities (vertices and edges). In contrast, the \textbf{discrete parametric framework} proposed in this work replaces integration over the interior Lebesgue measure with a deterministic evaluation restricted to the boundary. We show that the minimum is dictated exclusively by the local KKT stationarity condition $R_i^2 C_i \propto S_i^\circ$, bypassing the need for Gaussian averaging.

\textbf{2. Strict local minimality (Nazarov et al., 2010):}
In \cite{NazarovEtAl2010}, it was established that the hypercube is a strict and topologically isolated local minimum through the analysis of continuous perturbations in Banach spaces. Our framework (Sections \ref{sec:hessian} and \ref{sec:truncation}) complements this theorem by providing an explicit algebraic quantification for polyhedral topology: we demonstrate that the increase in the functional results strictly from the order difference between the primal volumetric variation ($\mathcal{O}(\epsilon^n)$) and the variation of the dual support measure ($\mathcal{O}(\epsilon)$).

\textbf{3. Polyhedral approximation and shadow systems:}
In recent literature, volumetric minimality has been addressed through shadow systems proposed by Campi and Gronchi \cite{campi2006} to bound global asymmetries, as well as through asymptotic approximation algorithms for polytopes (e.g., Arya, Fonseca, and Mount \cite{arya2025, arya2026}). While these techniques are fundamental for bounding global volumetric differences, the \textbf{KKT stationarity condition} developed here constitutes, to our knowledge, the first parametric characterization evaluated vertex by vertex. This approach isolates the volumetric variation variable at the level of individual components, providing an analytical framework capable of treating singularities without requiring the continuous smoothing of the boundary.

\vspace{0.3cm}
\begin{center}
\begin{minipage}{0.9\textwidth}
\hrule
\vspace{0.2cm}
\textbf{Main Result.} The central result of this work is Theorem \ref{thm:minimo-local}, which establishes that the Hanner orbit ($\mathcal{C}_n$ and $\mathcal{O}_n$) constitutes a strict and topologically isolated local minimum of the Mahler functional $\mathcal{V}_M$ within the space of centrally symmetric polytopes. We prove that this configuration admits a quadratic quantitative stability bound in the local Hausdorff metric modulo the action of $GL(n,\mathbb{R})$ for all dimensions $n\ge 2$, controlling both isometric radial deformations and combinatorial mutations (isovertex folding and vertex truncation).
\vspace{0.2cm}
\hrule
\end{minipage}
\end{center}
\vspace{0.3cm}

\section{Variational Foundation: Analytical Derivation of the KKT Stationarity Condition}
\label{sec:variational}

To formalize that the Hanner orbit constitutes a strict and topologically isolated local minimum, we derive the central equation of the system from the foundations of convex variational calculus. It is imperative to avoid heuristic partitioning into independent pyramids, as the radial perturbation of a vertex simultaneously modifies the geometric metric of the adjacent hyper-faces in the dual space. Instead, the system is analyzed by coupling the primal and dual metrics through the differentiability of the support function.

\begin{definition}[KKT Stationarity Condition for the Mahler Functional]
\label{def:estacionariedad_mahler}
Let $\mathcal{P} \subset \mathbb{R}^n$ be a centrally symmetric convex polytope with respect to the origin, whose vertex set consists exactly of $N$ antipodal pairs parameterized by their radial immersions $\pm v_i = \pm R_i u_i$, where $u_i \in \mathbb{S}^{n-1}$ are fixed directional vectors and $R_i > 0$. The objective functional is the Mahler affine invariant, $\mathcal{V}_M(\mathcal{P}) = \mathrm{vol}(\mathcal{P}) \cdot \mathrm{vol}(\mathcal{P}^\circ)$.

The polytope $\mathcal{P}$ is said to satisfy the KKT stationarity condition with respect to purely radial deformations that preserve central symmetry if, and only if, the partial derivative of $\mathcal{V}_M$ vanishes with respect to the immersion parameter of each antipodal pair $R_i$:
\begin{equation}
\label{eq:derivada_producto}
\frac{\partial \mathcal{V}_M}{\partial R_i} = \mathrm{vol}(\mathcal{P}^\circ) \frac{\partial \mathrm{vol}(\mathcal{P})}{\partial R_i} + \mathrm{vol}(\mathcal{P}) \frac{\partial \mathrm{vol}(\mathcal{P}^\circ)}{\partial R_i} = 0 \quad \forall i \in \{1, \dots, N\}
\end{equation}
\end{definition}

We evaluate the volumetric gradients in the primal and dual domains in a coupled manner.

\begin{lemma}[Primal Volumetric Gradient]
\label{lem:grad_primal}
Let $\mathcal{P} \subset \mathbb{R}^n$ be a convex polytope with the origin in its interior. Let $v_i$ be a vertex of $\mathcal{P}$ with unit directional vector $u_i = v_i / R_i$. The gradient of the primal volume with respect to the radial dilation $R_i$ is an analytical constant $C_i$, whose closed-form expression is given by:
\begin{equation}
\label{eq:Ci_definicion}
C_i = \frac{1}{n!} \sum_{j \in \mathcal{I}(v_i)} \det(u_i, w_{j,1}, w_{j,2}, \dots, w_{j,n-1})
\end{equation}
where $\mathcal{I}(v_i)$ denotes the set of indices of the $(n-1)$-simplices that make up the simplicial star of vertex $v_i$, and $\{w_{j,k}\}_{k=1}^{n-1}$ are the position vectors of the fixed adjacent vertices. Equivalently, in terms of the geometry of the incident facets $F_j$ in the neighborhood of $v_i$:
\begin{equation}
C_i = \frac{1}{n} \sum_{F_j \ni v_i} \mathrm{vol}_{n-1}(F_j) \langle n_j, u_i \rangle
\end{equation}
where $n_j$ is the outward unit normal to the facet $F_j$. Consequently, the rate of change $\frac{\partial \mathrm{vol}(\mathcal{P})}{\partial R_i} = C_i$ is invariant with respect to the radial parameter $R_i$.
\end{lemma}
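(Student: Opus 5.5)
Fix all vertices other than $v_i$ and view $v_i = R_i u_i$ as the only moving point. The plan is to decompose $\mathcal{P}$ into cones from the origin over a triangulation of its boundary, isolate the part that depends on $R_i$, and differentiate it directly. The key restriction is that $R_i$ varies in a small interval on which the combinatorial type of $\mathcal{P}$ does not change. This means no new vertex becomes coplanar, no facet through $v_i$ breaks, and $v_i$ stays a vertex. Away from such degeneracies, the face lattice is locally constant. I will assume this, since it is implicit in the statement: the set $\mathcal{I}(v_i)$ is treated as fixed.

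\textbf{Step 1 (simplicial decomposition).} Triangulate each facet of $\mathcal{P}$ without adding new vertices, so that the star of $v_i$ in the boundary consists of the $(n-1)$-simplices $[v_i, w_{j,1}, \dots, w_{j,n-1}]$ for $j \in \mathcal{I}(v_i)$. Coning from the origin, which lies in the interior, gives
\[
\mathrm{vol}(\mathcal{P}) = \sum_{\sigma} \frac{1}{n!}\,\bigl|\det(\sigma)\bigr| ,
\]
where $\sigma$ ranges over the boundary simplices. Only the simplices in the star of $v_i$ depend on $R_i$. We orient each one so that its determinant is positive, which is consistent because the origin is interior.

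\textbf{Step 2 (linearity).} For $j \in \mathcal{I}(v_i)$ we have $\det(R_i u_i, w_{j,1}, \dots, w_{j,n-1}) = R_i \det(u_i, w_{j,1}, \dots, w_{j,n-1})$ by multilinearity, and this stays positive. Differentiating in $R_i$ yields exactly \eqref{eq:Ci_definicion}. The coefficient does not depend on $R_i$, so $\partial\,\mathrm{vol}(\mathcal{P})/\partial R_i = C_i$ is locally constant on the interval where the combinatorics are fixed.

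\textbf{Step 3 (facet form).} For the simplex with vertices $v_i, w_{j,1}, \dots, w_{j,n-1}$ lying in the facet $F_j$, write $\det(u_i, w_{j,1}, \dots, w_{j,n-1}) = \det(u_i, w_{j,1}-v_i, \dots, w_{j,n-1}-v_i)$ plus correction terms. A cleaner route is the base-times-height formula: the cone over a simplex $\sigma \subset F_j$ has volume $\frac{1}{n}\mathrm{vol}_{n-1}(\sigma)\, h_j$, where $h_j = \langle n_j, x\rangle$ for any $x \in F_j$.

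Here an honest check is needed, because $h_j$ itself depends on $R_i$ when $v_i \in F_j$. The cleanest approach is the classical first variation formula $\frac{d}{dt}\mathrm{vol} = \sum_j \mathrm{vol}_{n-1}(F_j)\,\dot h_j$. Moving $v_i$ by $u_i$ in a facet that stays a simplex gives $\dot h_j$ only through that vertex's barycentric weight, namely $\frac{1}{n}\langle n_j, u_i\rangle$ in the averaged sense. I would therefore verify the second formula by showing that each simplex contributes $\frac{1}{n}\mathrm{vol}_{n-1}(\sigma)\langle n_\sigma, u_i\rangle$. This is the standard identity $\frac{\partial}{\partial v}\mathrm{vol}(\mathrm{conv}(0,\sigma)) = \frac{1}{n}\mathrm{vol}_{n-1}(\sigma')\, n'$, taken over the appropriate face opposite vertex $v$, with $n'$ that face's normal. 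The resulting formula should then be summed over the facets.

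\textbf{Main obstacle.} The delicate point is Step 3 together with the standing assumption on combinatorial stability. Differentiating in $v_i$ gives face normals opposite $v_i$ within each cone, not the normal of $F_j$. The sum must therefore be regrouped using Cramer's rule or cofactor expansion of the determinant along the $u_i$ column. The cofactor vector of $(w_{j,1}, \dots, w_{j,n-1})$ equals $(n-1)!$ times the area-weighted normal of the simplex $[0, w_{j,1}, \dots, w_{j,n-1}]$.

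I would first try to prove the facet form exactly in this cofactor sense. If it does not literally match $\mathrm{vol}_{n-1}(F_j)\langle n_j, u_i\rangle$, the fallback is to record the equality only after summation over the star, using the divergence theorem on the star of $v_i$. Facets not through $v_i$ do not contribute, and the internal cone faces cancel in pairs.
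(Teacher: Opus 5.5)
Your Steps 1--2 are the paper's proof in substance. You triangulate the star of $v_i$, cone from the origin, pull $R_i$ out of the first column of each determinant, and observe that the remaining simplices give an $R_i$-independent term. Where you depart from the paper you improve on it, and two points are worth making.

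\textbf{The stability hypothesis is essential, not implicit.} The lemma is false without it, and the paper's proof uses it silently by treating the $w_{k,j}$ and $V_{\mathrm{ext}}$ as fixed. For every facet $F\ni v_i$ one has $\langle n_F,u_i\rangle=h_F/R_i>0$, where $h_F>0$ is the distance from $0$ to $\mathrm{aff}(F)$. So radial motion pushes $v_i$ off $\mathrm{aff}(F)$, and $F$ breaks unless it is a pyramid with apex $v_i$. When it breaks, the volume has a kink and the determinant sum depends on the triangulation.

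For example, take $[-1,1]^3$ with $v_1=(1,1,1)$ and replace $v_1$ by $(1+t)v_1$. Then $\mathrm{vol}=8+4t$ for $t\ge 0$ and $\mathrm{vol}=8+2t$ for $t\le 0$, so $\partial\mathrm{vol}/\partial R_1$ jumps from $2/\sqrt3$ to $4/\sqrt3$. The determinant sum returns $2/\sqrt3$ if the three incident squares are cut by diagonals avoiding $v_1$, and $4/\sqrt3$ if they are cut by diagonals through $v_1$. The facet formula returns $4/\sqrt3$. The value $C_1=\sqrt3$ the paper later uses for $\mathcal{C}_3$ is neither.

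So state your hypothesis explicitly as an added assumption, for instance that every facet through $v_i$ is a pyramid with apex $v_i$ (automatic if $\mathcal{P}$ is simplicial near $v_i$). Note that it excludes $\mathcal{C}_n$ for $n\ge 3$.

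\textbf{The facet form.} The paper gives no argument for it at all, so your Step 3 is genuinely new. However, drop its per-simplex claim, which is false. As you yourself observe, $\partial_{R_i}\mathrm{vol}([0,\sigma])$ involves the cone face $[0,w_\sigma]$ opposite $v_i$, not $\sigma$.

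Your divergence-theorem fallback is the right argument. Apply it to the constant field $u_i$ on $\bigcup_{\sigma\ni v_i}[0,\sigma]$:
\begin{itemize}
\item the cone faces through both $0$ and $v_i$ cancel in pairs;
\item the faces $[0,w_\sigma]$ contribute $\frac{1}{(n-1)!}\sum_\sigma\det(u_i,w_\sigma)$;
\item this balances $\sum_\sigma\mathrm{vol}_{n-1}(\sigma)\langle n_\sigma,u_i\rangle$, and dividing by $n$ gives the identity at the level of star simplices.
\end{itemize}
Passing to $\sum_j\mathrm{vol}_{n-1}(F_j)\langle n_j,u_i\rangle$ requires the star simplices to tile each $F_j$, which is again exactly your stability hypothesis. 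In the cube example with avoiding diagonals they cover only half of each square, which is why the two formulas disagree there.

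Alternatively, the relevant first-variation formula is $\int_{\partial\mathcal{P}}\langle\dot x,n\rangle$ with the piecewise-affine velocity $\dot x=\lambda_{v_i}u_i$ on each star simplex. The parallel-translation formula $\sum_j\mathrm{vol}_{n-1}(F_j)\dot h_j$ does not apply, because the facets through $v_i$ rotate. Since $\lambda_{v_i}$ averages to $1/n$ on each simplex, this gives the facet form directly. Equality with the determinant form then follows because both compute the same derivative.
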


\begin{proof}
Since the polytope $\mathcal{P}$ is convex and contains the origin in its interior, its total volume can be evaluated through a projected triangulation from the origin toward the facets of the boundary $\partial \mathcal{P}$.

Let $\mathrm{St}(v_i)$ be the \textit{star} of vertex $v_i$, defined as the union of all $(n-1)$-dimensional facets of $\partial \mathcal{P}$ incident to $v_i$. We triangulate $\mathrm{St}(v_i)$ into a finite set of boundary $(n-1)$-simplices $\{\Delta_k\}$ as previously postulated. Every simplex $\Delta_k$ in this triangulation has $v_i$ as the apical vertex, while the rest of its generating vertices $\{w_{k,1}, w_{k,2}, \dots, w_{k,n-1}\}$ reside in the link $\mathrm{lk}(v_i)$. Since the dilation of $v_i$ is purely radial, these adjacent vertices remain spatially and topologically fixed in $\mathbb{R}^n$.

The volume of $\mathcal{P}$ is partitioned additively:
\begin{equation}
\mathrm{vol}(\mathcal{P}) = V_{\text{ext}} + \sum_{k} \mathrm{vol}([0, \Delta_k])
\end{equation}
where $V_{\text{ext}}$ is the volume contributed by simplices not containing $v_i$, which is strictly invariant with respect to $R_i$ ($\frac{\partial V_{\text{ext}}}{\partial R_i} = 0$). The term $[0, \Delta_k]$ denotes the $n$-simplex formed by the origin and the boundary $(n-1)$-simplex $\Delta_k$.

The Euclidean volume of each $n$-simplex is expressed algebraically via the determinant of its generating vectors (assuming positive orientation):
\begin{equation}
\mathrm{vol}([0, \Delta_k]) = \frac{1}{n!} \det(v_i, w_{k,1}, w_{k,2}, \dots, w_{k,n-1})
\end{equation}
Substituting the radial parameterization $v_i = R_i u_i$ and exploiting the multilinearity of the determinant with respect to its first column vector, we obtain:
\begin{equation}
\mathrm{vol}([0, \Delta_k]) = \frac{1}{n!} \det(R_i u_i, w_{k,1}, \dots, w_{k,n-1}) = R_i \left( \frac{1}{n!} \det(u_i, w_{k,1}, \dots, w_{k,n-1}) \right)
\end{equation}

Evaluating the partial derivative of the total volume $\mathrm{vol}(\mathcal{P})$ with respect to $R_i$, the linear scale parameter vanishes, yielding the exact analytical gradient:
\begin{equation}
\frac{\partial \mathrm{vol}(\mathcal{P})}{\partial R_i} = 0 + \sum_{k} \frac{1}{n!} \det(u_i, w_{k,1}, \dots, w_{k,n-1}) \equiv C_i
\end{equation}
Since this expression depends exclusively on vectors that are transversely invariant under scalar dilation (the unit vector $u_i$ and the fixed vertices $w_{k,j}$), it is proved that the volumetric gradient $\partial \mathrm{vol}(\mathcal{P}) / \partial R_i$ is an analytical constant $C_i$. This definition naturally absorbs the geometric factor $1/n!$ intrinsic to the simplicial volume, ensuring the global consistency of the variational operator throughout $\mathbb{R}^n$.
\end{proof}

\begin{lemma}[Dual Volumetric Gradient under Polar Inversion]
\label{lem:grad_dual}
The rate of change of the dual volume under the perturbation of a single primal vertex $v_i$ is dictated by Minkowski's differential formula, yielding the exact gradient:
\begin{equation}
\frac{\partial \mathrm{vol}(\mathcal{P}^\circ)}{\partial R_i} = - \frac{S_i^\circ}{R_i^2}
\end{equation}
where $S_i^\circ$ is the $(n-1)$-dimensional Euclidean measure of the polar hyper-face conjugate to $v_i$.
\end{lemma}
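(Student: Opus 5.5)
The plan is to move the perturbation to the dual side, where moving a primal vertex radially becomes a parallel translation of a single facet of $\mathcal{P}^\circ$. Then the classical Minkowski (Aleksandrov) variational formula for the volume as a function of the support numbers applies. I will write $\mathcal{P}^\circ = \{ y \in \mathbb{R}^n : \langle y, v_j \rangle \le 1 \text{ for all vertices } v_j \}$. With $v_j = R_j u_j$, the constraint associated with $v_j$ reads $\langle y, u_j \rangle \le h_j := 1/R_j$. So $\mathcal{P}^\circ$ is the polytope with fixed outer unit normals $\{\pm u_j\}$ and support numbers $h_j = 1/R_j$. Since $v_i$ is a genuine vertex of $\mathcal{P}$, the constraint with normal $u_i$ is facet-defining. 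Its facet $F_i^\circ$ is the polar hyper-face conjugate to $v_i$, and $S_i^\circ = \mathrm{vol}_{n-1}(F_i^\circ) > 0$. The key structural point is that varying $R_i$ changes only $h_i$: the normals $u_j$ and all other support numbers stay fixed.

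Next I will establish Minkowski's formula $\partial\, \mathrm{vol}(\mathcal{P}^\circ)/\partial h_i = S_i^\circ$. Concretely, for small $\delta$ the polytope with $h_i$ replaced by $h_i + \delta$ differs from $\mathcal{P}^\circ$ by a thin slab over $F_i^\circ$ of width $|\delta|$, up to a correction supported near the relative boundary of $F_i^\circ$. That correction has volume $O(\delta^2)$, because the adjacent facets meet the hyperplane $\langle y,u_i\rangle = h_i$ transversally. Hence the slab volume is $S_i^\circ \delta + O(\delta^2)$. Alternatively, I will cite the standard result from Schneider that $h \mapsto \mathrm{vol}$ is $C^1$ on the set of support vectors with fixed normals yielding full-dimensional polytopes, with partial derivatives equal to the facet areas. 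The chain rule with $dh_i/dR_i = -1/R_i^2$ then gives the statement, $\partial\, \mathrm{vol}(\mathcal{P}^\circ)/\partial R_i = -S_i^\circ / R_i^2$.

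I will also check the bookkeeping required by the symmetric setting of Definition \ref{def:estacionariedad_mahler}. There, the pair $\pm v_i$ moves together, so both facets $\pm F_i^\circ$ translate. By central symmetry they have equal area, so the derivative is $-2\,\mathrm{vol}_{n-1}(F_i^\circ)/R_i^2$. The formula holds as stated provided $S_i^\circ$ is read as the combined measure of the two conjugate facets, and I will make this convention explicit. The same convention applies to $C_i$ in Lemma \ref{lem:grad_primal}.

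The main obstacle is justifying differentiability (rather than one-sided derivatives) of $\mathrm{vol}(\mathcal{P}^\circ)$ at the given $R_i$. The danger is that the combinatorial type of $\mathcal{P}^\circ$ could change under the perturbation, for instance when moving $h_i$ makes a neighboring constraint redundant or creates new vertices. My approach is to note that the facet area $S_i^\circ(h)$ is continuous in $h$: it is the $(n-1)$-volume of a slice of a polytope varying continuously in the Hausdorff metric. Integrating this continuous function gives $\mathrm{vol}(h_i+\delta) - \mathrm{vol}(h_i) = \int_0^\delta S_i^\circ(h_i+t)\,dt$. This yields a genuine two-sided derivative even at combinatorial transitions, as long as $F_i^\circ$ stays nondegenerate, which holds because $v_i$ is a vertex.
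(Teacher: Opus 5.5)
Your proposal is correct and follows the paper's route. Only the dual facet conjugate to $v_i$ moves, with support number $h_i^\circ = 1/R_i$; Minkowski's first variation gives $\partial\,\mathrm{vol}(\mathcal{P}^\circ)/\partial h_i^\circ = S_i^\circ$, and the chain rule with $dh_i^\circ/dR_i = -R_i^{-2}$ concludes. Your Cavalieri identity $\mathrm{vol}(h_i+\delta)-\mathrm{vol}(h_i)=\int_0^\delta S_i^\circ(h_i+t)\,dt$, together with continuity of the facet area, justifies two-sided differentiability across combinatorial changes (which do occur, e.g.\ at $\mathcal{O}_n$ for $n\ge 3$) better than the paper's assertion that the tangential terms $\sum_k h_k^\circ\,dS_k^\circ$ vanish, since they in fact equal $(n-1)\,d\,\mathrm{vol}(\mathcal{P}^\circ)$; your factor-of-two remark for the pair parametrization of Definition~\ref{def:estacionariedad_mahler} is also correct and harmless, because the same doubling of $C_i$ cancels it in the KKT identity.
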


\begin{proof}
Under polar inversion $\mathcal{P}^\circ = \{ y \in \mathbb{R}^n \mid \langle x, y \rangle \le 1 \ \forall x \in \mathcal{P} \}$, each primal vertex $v_k$ defines a support hyperplane $H_k^\circ$ bounding the dual body. The orthogonal distance from the origin to said hyperplane is strictly $h_k^\circ = \|v_k\|^{-1} = R_k^{-1}$.

Consider the differential variation where vertex $v_i$ dilates radially to $R_i + \delta R_i$, while all other vertices $v_j$ ($j \neq i$) remain fixed. Since the vectors $v_j$ are constant, their corresponding polar hyperplanes $H_j^\circ$ remain static in space, ensuring that $\delta h_j^\circ = 0$ for all $j \neq i$.

By Minkowski's first variation of volume theorem (grounded in Brunn-Minkowski theory), the differential volume of a polytope subject to normal translations of its support hyperplanes is given exclusively by:
\begin{equation}
d \mathrm{vol}(\mathcal{P}^\circ) = \sum_{k} S_k^\circ \, d h_k^\circ
\end{equation}
It is imperative to note that although the hyper-surface measures $S_j^\circ$ of adjacent facets vary continuously due to intersections with the moving hyperplane $H_i^\circ$, the tangential variations ($\sum h_k^\circ dS_k^\circ$) vanish due to the geometric closure conditions of the polytope.

Since only hyperplane $i$ undergoes a normal displacement, the sum reduces to a single non-zero term:
\begin{equation}
d \mathrm{vol}(\mathcal{P}^\circ) = S_i^\circ \, d h_i^\circ
\end{equation}
Applying the chain rule over the moving support parameter: $d h_i^\circ = \frac{\partial (R_i^{-1})}{\partial R_i} \delta R_i = - R_i^{-2} \delta R_i$. Substituting this differential relation into the volumetric equation, we derive the exact analytical identity:
\begin{equation}
\frac{\partial \mathrm{vol}(\mathcal{P}^\circ)}{\partial R_i} = - \frac{S_i^\circ}{R_i^2}
\end{equation}
\end{proof}

\begin{theorem}[Fundamental Equation of the KKT Stationarity Condition]
A centrally symmetric convex polytope satisfies the KKT stationarity condition of the Mahler functional if and only if its marginal variations satisfy the stationarity identity:
\begin{equation}
R_i^2 C_i = \left( \frac{\mathrm{vol}(\mathcal{P})}{\mathrm{vol}(\mathcal{P}^\circ)} \right) S_i^\circ
\end{equation}
\end{theorem}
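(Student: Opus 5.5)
The plan is to substitute Lemmas \ref{lem:grad_primal} and \ref{lem:grad_dual} into the product-rule expression \eqref{eq:derivada_producto} from Definition \ref{def:estacionariedad_mahler}, and then rearrange. Before doing so, I will account for central symmetry, because the parameter $R_i$ moves both $v_i$ and $-v_i$. By symmetry, both vertices contribute equally to the primal gradient, and the two polar facets conjugate to $\pm v_i$ contribute equally to the dual gradient. So both gradients pick up the same factor $2$, which cancels in the final identity. I will therefore treat $C_i$ and $S_i^\circ$ as the contributions of a single representative of the antipodal pair, or equivalently as the symmetrized contributions. Either convention gives the same equation, provided it is used consistently on both sides.

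With this convention, Lemma \ref{lem:grad_primal} gives $\partial \mathrm{vol}(\mathcal{P})/\partial R_i = C_i$ and Lemma \ref{lem:grad_dual} gives $\partial \mathrm{vol}(\mathcal{P}^\circ)/\partial R_i = -S_i^\circ/R_i^2$. Substituting these into \eqref{eq:derivada_producto} yields
\begin{equation*}
\frac{\partial \mathcal{V}_M}{\partial R_i} = \mathrm{vol}(\mathcal{P}^\circ)\, C_i - \mathrm{vol}(\mathcal{P})\,\frac{S_i^\circ}{R_i^2}.
\end{equation*}
The KKT stationarity condition holds exactly when this expression vanishes for every $i$. Both volumes are strictly positive because $\mathcal{P}$ is full-dimensional with the origin in its interior, so $\mathcal{P}^\circ$ is a bounded full-dimensional polytope. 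Multiplying by $R_i^2>0$ and dividing by $\mathrm{vol}(\mathcal{P}^\circ)>0$ are invertible operations. Hence vanishing for all $i$ is equivalent to $R_i^2 C_i = (\mathrm{vol}(\mathcal{P})/\mathrm{vol}(\mathcal{P}^\circ))\, S_i^\circ$ for all $i$, which proves both directions of the equivalence at once.

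The main obstacle is not the algebra but justifying that the partial derivatives exist and equal the lemma formulas at the point considered. Lemma \ref{lem:grad_primal} assumes the combinatorial type, namely the star of $v_i$, is unchanged under small radial motion. This holds for simplicial (or generic) configurations, but it can fail where moving $v_i$ changes the face lattice; at such points only one-sided derivatives exist. The same issue appears for the dual: facet $H_i^\circ$ might disappear, or adjacent facets might change combinatorics. Minkowski's first variation formula still gives a one-sided derivative in that case, but it does not give a two-sided one.

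To handle this, I would first note that the first-variation formula for the polar volume is valid as a derivative whenever $H_i^\circ$ supports a genuine facet, which holds because $v_i$ is a vertex. For the primal side, I would point out that the volume is continuous and piecewise polynomial in $R_i$. I would then restrict the statement to points where the face structure is locally stable, which is exactly the setting of Definition \ref{def:estacionariedad_mahler}. Under that standing assumption the proof reduces to the substitution above.
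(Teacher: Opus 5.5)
This is correct and is essentially the paper's own argument: substitute Lemma~\ref{lem:grad_primal} and Lemma~\ref{lem:grad_dual} into \eqref{eq:derivada_producto} and rearrange. The paper leaves your extra bookkeeping implicit: the factor $2$ from moving $\pm v_i$ together, which cancels on both sides; positivity of both volumes, which makes the rearrangement an equivalence; and the restriction to configurations where the primal star of $v_i$ is combinatorially stable, so that a two-sided derivative exists (this genuinely fails at $\mathcal{C}_n$ for $n\ge 3$, whose facets are not simplices), whereas the dual side needs no such caveat, since $\mathrm{vol}(\mathcal{P}^\circ)$ is differentiable in the support numbers $1/R_k$ even when the combinatorics of $\mathcal{P}^\circ$ change.
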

\begin{proof}
Substituting the exact volumetric gradients derived from the previous lemmas into the first variation equation \eqref{eq:derivada_producto}:
\begin{equation*}
\mathrm{vol}(\mathcal{P}^\circ) \left( C_i \right) + \mathrm{vol}(\mathcal{P}) \left( - \frac{S_i^\circ}{R_i^2} \right) = 0
\end{equation*}
Rearranging algebraically to isolate the primal variation term:
\begin{equation*}
\mathrm{vol}(\mathcal{P}^\circ) C_i = \mathrm{vol}(\mathcal{P}) \frac{S_i^\circ}{R_i^2} \implies R_i^2 C_i = \left( \frac{\mathrm{vol}(\mathcal{P})}{\mathrm{vol}(\mathcal{P}^\circ)} \right) S_i^\circ
\end{equation*}
This identity proves that the previously evaluated KKT stationarity condition is analytically exact and independent of dimensional scale factors ($1/n$), integrally validating subsequent stationary evaluations.
\end{proof}

\section{Second-Order Analysis: The Hessian and Local Strict Convexity}
\label{sec:hessian}

The vanishing of the first variation ($\delta \mathcal{V}_M = 0$) establishes the stationarity of the pair $\{\mathcal{C}_n, \mathcal{O}_n\}$, but is analytically insufficient to guarantee the functional's minimum nature. To rule out the existence of a topological saddle point and confirm strict local convexity in the neighborhood of the hypercube $\mathcal{C}_n$, it is imperative to evaluate the second volumetric variation, characterized by the Hessian matrix $\mathcal{H}_{ij} = \frac{\partial^2 \mathcal{V}_M}{\partial R_i \partial R_j}$.

We demonstrate below that, in the asymptotic polyhedral equilibrium state of the Hanner orbit, the second-order dual variation dominates the system, endowing the Hessian with positive definiteness.

\begin{theorem}[Spectral Positivity via Graph Laplacian and Homogeneity]
\label{thm:hessiano_mahler}
At the stationary point defined by the hypercube $\mathcal{C}_n$, the radial Hessian matrix $\mathcal{H} = [\mathcal{H}_{ij}]$ of the Mahler functional is analytically equivalent to the discrete Graph Laplacian of the quotient polyhedral network. Its null space is exactly one-dimensional, spanned by the uniform scaling vector $\mathbf{1}$. Consequently, for any radial perturbation that breaks affine equivalence ($\delta \mathbf{R} \notin \text{span}\{\mathbf{1}\}$), the quadratic form is strictly positive ($\delta \mathbf{R}^T \mathcal{H} \, \delta \mathbf{R} > 0$), mathematically securing $\mathcal{C}_n$ as a strict local minimum in the radial stratum.
\end{theorem}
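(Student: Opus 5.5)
The plan is to compute $\mathcal{H}$ explicitly at $\mathcal{C}_n$ by differentiating the first-variation formula \eqref{eq:derivada_producto} once more. Then I would show that $\mathcal{H}$ has the structure of a weighted Laplacian: nonpositive off-diagonal entries supported on the edge graph of the quotient $\mathcal{C}_n/\{\pm 1\}$, zero row sums, and a connected underlying graph. The kernel statement then follows from the standard fact that a weighted Laplacian on a connected graph with positive weights has kernel $\mathrm{span}\{\mathbf{1}\}$.

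\textbf{Second derivatives.} Differentiating \eqref{eq:derivada_producto} with Lemma~\ref{lem:grad_primal} and Lemma~\ref{lem:grad_dual} gives
\begin{equation*}
\mathcal{H}_{ij} = \mathrm{vol}(\mathcal{P}^\circ)\frac{\partial C_i}{\partial R_j} - C_i\frac{S_j^\circ}{R_j^2} - C_j\frac{S_i^\circ}{R_i^2} - \mathrm{vol}(\mathcal{P})\frac{\partial}{\partial R_j}\Big(\frac{S_i^\circ}{R_i^2}\Big).
\end{equation*}
The pieces are handled as follows.
\begin{enumerate}[label=(\roman*)]
\item For the primal term, I would use the determinant formula \eqref{eq:Ci_definicion}. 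Multilinearity shows that $\partial C_i/\partial R_j$ is nonzero only when $v_j$ lies in the link of $v_i$. In that case it equals $\frac{1}{n!}\sum\det(u_i,u_j,\dots)$ over the simplices containing both vertices, and $\partial C_i/\partial R_i=0$.
\item For the dual term, I would use the mixed-volume formula $\partial S_i^\circ/\partial h_j^\circ = \mathrm{vol}_{n-2}(F_i^\circ\cap F_j^\circ)/\sin\theta_{ij}$ for $j\neq i$. Here $F_i^\circ$ is the facet of $\mathcal{O}_n$ dual to $v_i$ and $\theta_{ij}$ is the dihedral angle. The diagonal entry would be obtained from the Minkowski closure relation $\sum_k S_k^\circ n_k = 0$.
\item At the cube, all $R_i$ are equal, so the stationarity identity gives $C_i\,\mathrm{vol}(\mathcal{P}^\circ)=\mathrm{vol}(\mathcal{P})S_i^\circ/R_i^2$. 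This lets me merge the two cross terms into a rank-one piece proportional to $\mathbf{c}\mathbf{c}^T$, and by symmetry $\mathbf{c}$ is a constant vector.
\end{enumerate}

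\textbf{Laplacian structure.} Since $\mathcal{V}_M$ is invariant under dilation, it is homogeneous of degree $0$ in $\mathbf{R}$. Differentiating Euler's relation $\sum_j R_j\,\partial_j\mathcal{V}_M=0$ and using stationarity gives $\mathcal{H}\mathbf{R}=0$. At the cube $\mathbf{R}\propto\mathbf{1}$, so $\mathcal{H}\mathbf{1}=0$, i.e.\ zero row sums. It then remains to show $\mathcal{H}_{ij}\le 0$ for $i\ne j$, with strict inequality exactly on edges of the quotient graph. I would verify this by exploiting the symmetry of $\mathcal{C}_n$: the hyperoctahedral group acts transitively on edges, so each entry depends only on the Hamming distance between $u_i$ and $\pm u_j$, and one only needs to evaluate a few explicit determinants and $(n-2)$-volumes of faces of $\mathcal{O}_n$. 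Connectivity of the quotient edge graph of the cube is immediate. Hence $\mathcal{H}$ is a Laplacian with one-dimensional kernel, and $\delta\mathbf{R}^T\mathcal{H}\,\delta\mathbf{R}=\sum_{i\sim j}w_{ij}(\delta R_i-\delta R_j)^2>0$ for $\delta\mathbf{R}\notin\mathrm{span}\{\mathbf{1}\}$.

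\textbf{Main obstacle.} I expect the hard step to be regularity, not algebra. At the cube, the vertices of each square facet are coplanar, so a radial perturbation of one vertex changes the combinatorial type: the facet either bends outward or gets cut. Consequently $\mathrm{vol}(\mathcal{P})$ and the facet measures $S_i^\circ$ are only piecewise smooth near $\mathbf{R}=\mathbf{1}$, and the one-sided derivatives of $C_i$ may differ. This also means the stationarity used in step (iii) may only hold in a one-sided sense, since the subsequent Hessian calculation depends on it.

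I would first fix a consistent simplicial refinement of $\partial\mathcal{C}_n$ and compute the Hessian on each chamber of the resulting fan in $\mathbf{R}$-space. I would then check that the quadratic form is positive off $\mathbf{1}$ on every chamber, or alternatively that any first-order kink has the favorable sign, namely a nonnegative directional derivative. If the off-diagonal signs fail on some chamber, the fallback would be to replace strict positivity of the Hessian by a first-order directional-derivative argument on that chamber.
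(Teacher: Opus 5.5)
\textbf{Gap: the statement you are targeting is false as stated.} Your route (Euler homogeneity for zero row sums, a sign pattern off the diagonal, connectivity, hence a weighted Laplacian with kernel $\mathrm{span}\{\mathbf 1\}$) is essentially the paper's, and it cannot be completed.

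For $n=2$, every radial perturbation of the square is a parallelogram $\mathrm{conv}\{\pm R_1u_1,\pm R_2u_2\}$. So $\mathcal V_M\equiv 8$ on the whole radial stratum: $\mathcal H=0$, its kernel is all of $\mathbb R^2$, and no positive edge weights exist. The paper's bound $A\ge \frac{S_i^\circ}{n\sqrt n}(2^{n+1}-2n)$, which equals $2$ at $n=2$, is therefore wrong.

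For $n\ge 3$ the regularity problem you flag is decisive, not technical. Move one antipodal pair, $R_i=\sqrt n(1+t)$.
\begin{itemize}
\item For $t>0$, the points $\pm v_i$ lie beyond exactly the $n$ facets through each of them. So $\mathrm{vol}(\mathcal P)$ grows by $2n$ pyramids of base $2^{n-1}$ and height $t$, i.e.\ at rate $2^n$.
\item For $t<0$, only the corner simplices at $\pm v_i$ are affected, and the rate is $2^n/(n-1)!$.
\item Meanwhile $\mathrm{vol}(\mathcal P^\circ)$ is $C^1$ with slope $-2/(n-1)!$.
\end{itemize}
Hence
\[
\frac{d\mathcal V_M}{dt}\Big|_{0^+}=\frac{2^n}{n!}\bigl(2^n-2n\bigr)>0,\qquad \frac{d\mathcal V_M}{dt}\Big|_{0^-}=\frac{2^n}{(n-1)!}\Bigl(\frac{2^n}{n!}-2\Bigr)<0 .
\]
So $\mathcal V_M$ is not differentiable at $\mathcal C_n$ in the radial parameters: there is no stationary point and no Hessian. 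The paper's value $C_i=\sqrt n$ comes from Euler's relation, which only controls the derivative along $\mathbf 1$. It is not the derivative in any coordinate direction.

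\textbf{Why the chamber-by-chamber version also fails.} On a chamber, the smooth branch $\mathcal V_M^T$ (fixed triangulation $T$ of the non-simplicial facets) is not stationary at the cube. For $n=3$ one finds $\partial_{t_i}\mathcal V_M^T=\tfrac89(m_i-3)$. Here $m_i\in\{0,2,4,6\}$ counts the square facets in which pair $i$ lies on the diagonal chosen by the hull; it is even by central symmetry, so this never vanishes. Consequences:
\begin{itemize}
\item Differentiating Euler's relation gives $\mathcal H^T\mathbf R=-\nabla\mathcal V_M^T\neq 0$, so there are no zero row sums.
\item The reduction to Hamming distance is unavailable, since $T$ breaks the hyperoctahedral symmetry.
\item The chamber Hessians do not govern the sign of $\Delta\mathcal V_M$ at all, because the linear term dominates.
\end{itemize}

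\textbf{What is true is your fallback, and it proves a different statement.} With $R_i=\sqrt n(1+td_i)$, one gets
\[
D\mathcal V_M(\mathbf d)=\mathrm{vol}(\mathcal O_n)\sum_{F}\int_F\bigl(\widehat d_F-\bar d_F\bigr)\,dA .
\]
Here, on each facet $F$ of $\mathcal C_n$, $\widehat d_F$ is the upper concave envelope and $\bar d_F$ the multilinear interpolation of the corner values. The linear parts cancel exactly; that cancellation is the real content of the identity $n\sqrt n=n\sqrt n$.

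This quantity is $\ge 0$. It vanishes iff every $\bar d_F$ is concave, hence affine (a multilinear function has zero pure second derivatives). Together with $d(-\sigma)=d(\sigma)$, this forces $\mathbf d\in\mathrm{span}\{\mathbf 1\}$ when $n\ge 3$.

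So the mechanism behind strict local minimality on the radial stratum, modulo scaling, is a first-order kink with \emph{linear} growth. That refutes, rather than establishes, the claim that $\mathcal H$ is a graph Laplacian with one-dimensional kernel.
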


\begin{proof}
To rigorously establish the positive semi-definiteness of $\mathcal{H}$ across all dimensions $n \ge 2$ without external continuous references, we define the exact topological filter of the second variation and evaluate its discrete spectrum.

1. \textbf{Topological Filter and Explicit Diagonal Expansion:}
Let $N = 2^{n-1}$ be the number of antipodal vertex pairs parameterized by $\mathbf{R}$. The Hessian entries $\mathcal{H}_{ij} = \frac{\partial^2 \mathcal{V}_M}{\partial R_i \partial R_j}$ depend exclusively on the combinatorial distance between vertices in the hypercube lattice.

For $i = j$ (distance 0), let the diagonal entry be $\mathcal{H}_{ii} = A$. Differentiating the first variation product yields the general structural formulation:
\begin{equation}
A = - \frac{2 C_i S_i^\circ}{R_i^2} + \mathrm{vol}(\mathcal{P}) \left( \frac{2 S_i^\circ}{R_i^3} - \frac{1}{R_i^2} \frac{\partial S_i^\circ}{\partial R_i} \right)
\end{equation}
Evaluating this expression explicitly in the geometry of the generalized hypercube $\mathcal{C}_n$, where $\mathrm{vol}(\mathcal{C}_n) = 2^n$, $R_i = \sqrt{n}$, and $C_i = \sqrt{n}$, we substitute these values directly to obtain:
\begin{align}
A &= - \frac{2 \sqrt{n} S_i^\circ}{(\sqrt{n})^2} + 2^n \left( \frac{2 S_i^\circ}{(\sqrt{n})^3} - \frac{1}{(\sqrt{n})^2} \frac{\partial S_i^\circ}{\partial R_i} \right) \nonumber \\
&= - \frac{2 S_i^\circ}{\sqrt{n}} + \frac{2^{n+1} S_i^\circ}{n\sqrt{n}} - \frac{2^n}{n} \frac{\partial S_i^\circ}{\partial R_i} \nonumber \\
&= \frac{S_i^\circ}{n\sqrt{n}} \left( 2^{n+1} - 2n \right) - \frac{2^n}{n} \frac{\partial S_i^\circ}{\partial R_i}
\end{align}
Under polar inversion, elongating a primal vertex strictly contracts the surface area of its conjugate dual facet, establishing the geometric sign condition $\frac{\partial S_i^\circ}{\partial R_i} \le 0$. Since $2^{n+1} > 2n$ holds strictly for all spatial dimensions $n \ge 2$, the diagonal term satisfies the strict algebraic lower bound:
\begin{equation}
A \ge \frac{S_i^\circ}{n\sqrt{n}} \left( 2^{n+1} - 2n \right) > 0
\end{equation}

For $i$ and $j$ separated by a topological distance $d(i,j) \ge 2$, the primal vertices do not share a common facet, and their conjugate dual faces in the orthoplex $\mathcal{O}_n$ intersect in a subspace of codimension 3 or greater. Thus, the $(n-2)$-dimensional Hausdorff measure of their intersection is zero ($S_{ij}^\circ = 0$). By Aleksandrov's formula for mixed volumes, all higher-order cross-derivatives identically vanish: $\mathcal{H}_{ij} = 0$.

For adjacent pairs sharing an edge ($d(i,j) = 1$), let the cross-variation be $\mathcal{H}_{ij} = B$. By the combinatorial regular lattice of $\mathcal{C}_n$, each vertex has exactly $n$ incident neighbors.

2. \textbf{Euler's Homogeneity and Cross-Term Inversion:}
The Mahler functional is intrinsically homogeneous of degree 0 under global scaling ($\mathcal{V}_M(\lambda \mathbf{R}) = \mathcal{V}_M(\mathbf{R})$). By Euler's homogeneous function theorem, differentiating this identity at the stationary point $\mathbf{R} = \sqrt{n}\mathbf{1}$ dictates that the matrix-vector product with the uniform scaling vector must equal zero:
\begin{equation}
\mathcal{H} \cdot \mathbf{1} = \mathbf{0}
\end{equation}
Evaluating this identity for any arbitrary row $i$ reduces the coupled system to:
\begin{equation}
\mathcal{H}_{ii} + \sum_{j \sim i} \mathcal{H}_{ij} = A + nB = 0 \implies B = -\frac{A}{n}
\end{equation}
Substituting the explicit lower bound derived for $A$ forces the adjacent cross-derivative to be strictly negative for all dimensions:
\begin{equation}
B \le -\frac{S_i^\circ}{n^2\sqrt{n}} \left( 2^{n+1} - 2n \right) < 0
\end{equation}

3. \textbf{Spectral Evaluation via the Graph Laplacian:}
Combining the components of the topological filter, the matrix structure of the Hessian expands as a linear combination of the Identity matrix $I$ and the Adjacency matrix $A_G$ of the polyhedral graph:
\begin{equation}
\mathcal{H} = A I + B A_G = -B (n I - A_G)
\end{equation}
Since every node in the configuration has degree $n$, the operator $L = (n I - A_G)$ is precisely the discrete \textit{Graph Laplacian} of the network. Therefore, the Hessian matrix is identically proportional to the Laplacian: $\mathcal{H} = |B| L$.

By algebraic graph theory, because the adjacency graph of the hypercube quotient is finite, symmetric, and fully connected, its Graph Laplacian $L$ is a positive semi-definite operator whose discrete spectrum $\{\lambda_k\}$ satisfies:
\begin{itemize}
    \item Exactly one eigenvalue $\lambda_0 = 0$, corresponding to the uniform scaling eigenvector $\mathbf{1}$, which strictly maps the trivial scale-invariant orbit.
    \item All other eigenvalues $\lambda_k > 0$ for $k \in \{1, \dots, N-1\}$, corresponding to orthogonal symmetry-breaking polyhedral deformations.
\end{itemize}
Since $|B| > 0$, the strict positivity of the non-trivial spectrum is preserved. Any transverse perturbation outside the uniform scaling line yields $\delta \mathbf{R}^T \mathcal{H} \, \delta \mathbf{R} = |B| \delta \mathbf{R}^T L \, \delta \mathbf{R} > 0$, completing the proof.
\end{proof}

\begin{remark}[Polar Non-linearity and the Laplacian Spectrum]
The positive semi-definiteness of the Hessian matrix stems intrinsically from the non-linearity of polar inversion, manifested in the convexity term $2 S_i^\circ / R_i^3$ of the main diagonal. Through Euler's homogeneity, this positive diagonal analytically forces the cross-variations into the exact structure of a discrete Graph Laplacian. Unlike a completely non-degenerate critical point, the Hessian does possess a one-dimensional kernel (admitting exactly one zero eigenvalue); however, this kernel trivially maps the affine uniform scaling orbit. Analytically, any multivariate radial perturbation outside this uniform scale ($\delta \mathbf{R} \notin \text{span}\{\mathbf{1}\}$) strictly activates the positive spectrum of the Laplacian. This excludes the existence of flat or null descent directions transverse to the scaling orbit, confirming that the configuration operates as a deep potential well and a strict, topologically isolated local minimum in the quotient immersion space.
\end{remark}

\subsection{Combinatorial analogy with Klartag's covariance inequality}

The positive definiteness of the discrete Hessian matrix $\mathcal{H}$, which ensures that the Hanner orbit constitutes a strict and topologically isolated local minimum in the polyhedral regime, is the combinatorial counterpart of the continuous analytical result established by Klartag \cite{klartag2018} for the Mahler functional $\mathcal{V}_M$ under projective perturbations.

In the continuous formulation, Klartag models the Mahler variational metric by evaluating the functional $J(x) = \Phi_{V^*}(x) - \Phi_V^*(x)$ on the homogeneous cone $V$ generated by the body $K$. The minimality condition requires the Hessian matrix of $J$, evaluated at the barycenter $e$, to be positive semidefinite ($\nabla^2 J(e) \ge 0$). Calculating this differential operator in the tensor space $\mathbb{R}^{n \times n}$ yields:
\begin{equation}
    \frac{1}{n+1} \nabla^2 J(e) = (n+2)\text{Cov}(K^\circ) - \frac{1}{n+2}\text{Cov}(K)^{-1}
\end{equation}
Imposing the spectral condition $\nabla^2 J(e) \ge 0$ directly induces the matrix covariance inequality:
\begin{equation}
    (n+2)^2 \text{Cov}(K^\circ) \ge \text{Cov}(K)^{-1}
\end{equation}

In our discrete parametric framework based on the KKT stationarity condition, the second variation evaluates the curvature of the functional in the radial parameter space $\mathbb{R}^{N \times N}$. By bypassing the requirement of global differential smoothness of the boundary (non-existent in polytopes), the structure of the main diagonal of the radial Hessian $\mathcal{H}_{ii} = \frac{\partial^2 \mathcal{V}_M}{\partial R_i^2}$ rigorously replicates this architecture of competing operators:

\begin{enumerate}
    \item \textbf{Polar convexity term:} The dual covariance matrix $\text{Cov}(K^\circ)$, derived by Klartag from the second variation of the polar Laplace transform, correspond projectively to our dual second derivative $\frac{\partial^2 \mathrm{vol}(\mathcal{P}^\circ)}{\partial R_i^2}$. This is analytically dominated by the strict curvature term $\frac{2S_i^\circ}{R_i^3}$.
    \item \textbf{Transverse coupling term:} The primal inverse covariance matrix $-\text{Cov}(K)^{-1}$, which acts as the penalty factor in Klartag's model, has its direct algebraic analog in our cross perturbation term $-\frac{2C_i S_i^\circ}{R_i^2}$.
\end{enumerate}

Consequently, the strict positivity of the main diagonal ($\mathcal{H}_{ii} > 0$) at the stationary state $\mathcal{C}_n$, which reduces to the exact scalar inequality $2^{n+1} > 2n$, constitutes a purely algebraic derivation of Klartag's inequality in the polyhedral stratum. This demonstrates that the local convexity described by the theory of continuous covariances is intrinsically dictated by the balance between the radial expansion of the primal volume and the geometric non-linearity ($2/R_i^3$) of the convex inversion in the polar domain.

\begin{remark}[Spectral Gap and Exponential Stability Margin]
It is pertinent to observe that the positive semi-definiteness condition proved in the previous theorem constitutes an analytically conservative lower bound. In the exact evaluation of the diagonal components $A = \mathcal{H}_{ii}$, the inclusion of the term $- \frac{\mathrm{vol}(\mathcal{P})}{R_i^2} \frac{\partial S_i^\circ}{\partial R_i}$ reinforces the system's stability; since the measure of the dual hyper-face $S_i^\circ$ is a strictly decreasing function with respect to the primal immersion radius, it is guaranteed that $\partial S_i^\circ / \partial R_i \le 0$.

Given the exact topological relation $|B| = A/n$ derived from Euler's homogeneity, any additional positive contribution to the diagonal $A$ directly magnifies the uniform edge weights of the discrete Graph Laplacian $L$. By algebraic graph theory, the first non-zero eigenvalue (the Fiedler value, or algebraic connectivity) of the Hessian scales proportionally to this magnitude $|B|$. Consequently, the true spectral gap of the functional's curvature expands exponentially with dimension $n$. This not only confirms the topological isolation of the hypercube $\mathcal{C}_n$ but consolidates it as a globally robust attractor, bounded by an exponentially steepening potential well against transverse perturbations in the hyperspace of convex bodies.
\end{remark}

\begin{remark}[The Affine Null Space and Transversality]
It is imperative to observe that the Mahler functional $\mathcal{V}_M$ is a strict invariant under the action of the general linear group $GL(n, \mathbb{R})$. From a geometric perspective, this invariance requires the \textit{complete} Hessian matrix (evaluated over the total space of vertex deformations in $\mathbb{R}^{n \times N}$) to have a non-trivial null space of dimension $n^2$, corresponding to the tangent directions of the affine orbits.

The discrete radial Hessian $\mathcal{H}$ cleanly isolates this degeneracy. By restricting the system to purely radial perturbations, the parameter space transversally intersects the affine foliation, collapsing the $n^2$-dimensional continuous kernel into a single discrete dimension: the uniform scaling eigenvector $\mathbf{1}$ corresponding to the eigenvalue $\lambda_0 = 0$ of the Graph Laplacian.

This structure rigorously explains the phenomenological divergence in low dimensions: in $\mathbb{R}^2$, any radial deformation that preserves the central symmetry of four vertices inexorably generates a parallelogram. Since all parallelograms are equivalent under $GL(2, \mathbb{R})$, the perturbation resides strictly within the two-dimensional Hanner orbit, activating the zero eigenvalue. In contrast, for $n \ge 3$, any non-uniform radial perturbation of a vertex breaks the affine equivalence class of $\mathcal{C}_n$. It is precisely by leaving the uniform scaling orbit ($\delta \mathbf{R} \notin \text{span}\{\mathbf{1}\}$) that the Laplacian spectrum exposes its strict positivity, operating as a metric discriminant that penalizes topological asymmetry moving away from the Hanner orbit.
\end{remark}

\section{Dimensional Synthesis of the KKT Stationarity Condition}
\label{sec:dimensional_synthesis}

Let \(\mathcal{P} \subset \mathbb{R}^n\) be a centrally symmetric convex polytope with respect to the origin, and let \(\mathcal{P}^\circ\) be its dual polar polytope. The Mahler functional is defined as the affine invariant \(\mathcal{V}_M(\mathcal{P}) = \mathrm{vol}(\mathcal{P})\mathrm{vol}(\mathcal{P}^\circ)\).

By polar duality, the KKT stationarity condition for any vertex in strict equilibrium takes the form:
\begin{equation}
\label{eq:mahler_kkt}
R_i^2 \, C_i = \left( \frac{\mathrm{vol}(\mathcal{P})}{\mathrm{vol}(\mathcal{P}^\circ)} \right) S_i^\circ
\end{equation}
where \(C_i\) is the primal volumetric variation factor and \(S_i^\circ\) is the \((n-1)\)-dimensional Euclidean measure of the conjugate polar facet.

To illustrate the structural consistency of this identity and the behavior of the functional across low dimensions, we evaluate the Hanner orbit \(\{\mathcal{C}_n, \mathcal{O}_n\}\) explicitly for \(n=1\) to \(n=5\). The results are summarized in Table \ref{tab:dimensional_evaluation}.

\begin{table}[htbp]
\centering
\caption{KKT Stationarity Components and Mahler Volume for the Hanner Orbit}
\label{tab:dimensional_evaluation}
\renewcommand{\arraystretch}{1.4}
\setlength{\tabcolsep}{4pt} 
\small 
\begin{tabular}{ccccccc}
\toprule
\textbf{Dimension} & \textbf{Primal Vol.} & \textbf{Dual Vol.} & \textbf{Radial Param.} & \textbf{Primal Grad.} & \textbf{Dual Facet} & \textbf{Mahler Vol.} \\
(\(n\)) & (\(\mathcal{C}_n\)) & (\(\mathcal{O}_n\)) & (\(R_1^2\)) & (\(C_1\)) & (\(S_1^\circ\)) & \(\mathcal{V}_M(\mathcal{C}_n)\) \\
\midrule
1 & 2 & 2 & 1 & 1 & 1 & \(\mathbf{4}\) \\
2 & 4 & 2 & 2 & \(\sqrt{2}\) & \(\sqrt{2}\) & \(\mathbf{8}\) \\
3 & 8 & \(4/3\) & 3 & \(\sqrt{3}\) & \(\sqrt{3}/2\) & \(\mathbf{32/3} \approx 10.67\) \\
4 & 16 & \(2/3\) & 4 & 2 & \(1/3\) & \(\mathbf{32/3} \approx 10.67\) \\
5 & 32 & \(4/15\) & 5 & \(\sqrt{5}\) & \(\sqrt{5}/24\) & \(\mathbf{128/15} \approx 8.53\) \\
\bottomrule
\end{tabular}
\end{table}

\begin{proposition}[KKT Stationarity and Transitional Peak]
\label{prop:sintesis_dimensional}
For \(n \in \{1,\dots,5\}\), the pair \(\{\mathcal{C}_n, \mathcal{O}_n\}\) satisfies the stationarity condition \eqref{eq:mahler_kkt} exactly. Substituting the metrics yields the universal algebraic identity \(n\sqrt{n} \equiv n\sqrt{n}\). The functional follows the sequence \(\{4, 8, 32/3, 32/3, 128/15\}\), exhibiting a transitional volumetric peak at \(n=3\) and \(n=4\).
\end{proposition}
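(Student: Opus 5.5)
The plan is to verify the statement by direct evaluation, in three steps, taking $\mathcal{C}_n=[-1,1]^n$ and $\mathcal{O}_n=\mathcal{C}_n^\circ=\{y:\|y\|_1\le 1\}$.

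\textbf{Step 1: the Mahler values.} First I record the closed forms
\begin{equation*}
\mathrm{vol}(\mathcal{C}_n)=2^n, \qquad \mathrm{vol}(\mathcal{O}_n)=\frac{2^n}{n!}.
\end{equation*}
The second follows since $\mathcal{O}_n$ is the union of $2^n$ congruent orthant simplices $\mathrm{conv}(0,\pm e_1,\dots,\pm e_n)$, each of volume $1/n!$. Hence $\mathcal{V}_M(\mathcal{C}_n)=4^n/n!$. Evaluating at $n=1,\dots,5$ gives $4,8,32/3,32/3,128/15$. The ratio of consecutive terms is $\mathcal{V}_M(\mathcal{C}_{n+1})/\mathcal{V}_M(\mathcal{C}_n)=4/(n+1)$. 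This ratio is $>1$ for $n\le 2$, $=1$ for $n=3$ and $<1$ for $n\ge 4$, which shows the sequence is maximal exactly at $n=3,4$. This settles the "transitional peak" part of the statement, and the same ratio argument gives the decay for all $n\ge 5$.

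\textbf{Step 2: the KKT ingredients at a vertex.} By symmetry of the hyperoctahedral group every vertex pair is equivalent, so it suffices to take $v_1=(1,\dots,1)$. Then $R_1=\sqrt n$, so $R_1^2=n$. The polar facet conjugate to $v_1$ is $\{y:\langle v_1,y\rangle=1\}\cap\mathcal{O}_n=\mathrm{conv}(e_1,\dots,e_n)$. This is a regular $(n-1)$-simplex of edge $\sqrt2$, whose measure I compute from the pyramid formula: the simplex has height $1/\sqrt n$ from the origin, and
\begin{equation*}
\frac1n\cdot\frac{1}{\sqrt n}\,S_1^\circ=\frac1{n!},
\end{equation*}
so $S_1^\circ=\sqrt n/(n-1)!$. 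This reproduces the table column $1,\sqrt2,\sqrt3/2,1/3,\sqrt5/24$.

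\textbf{Step 3: substitution into \eqref{eq:mahler_kkt}.} The volume ratio is $\mathrm{vol}(\mathcal{C}_n)/\mathrm{vol}(\mathcal{O}_n)=n!$. The right-hand side of \eqref{eq:mahler_kkt} is therefore
\begin{equation*}
n!\,\frac{\sqrt n}{(n-1)!}=n\sqrt n .
\end{equation*}
With $C_1=\sqrt n$ as tabulated, the left-hand side is $R_1^2C_1=n\sqrt n$, giving the claimed identity $n\sqrt n\equiv n\sqrt n$ uniformly in $n$. The restriction to $n\le5$ is then only a matter of listing the table rows.

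\textbf{Main obstacle: justifying $C_1=\sqrt n$ from Lemma~\ref{lem:grad_primal}.} Everything else is elementary; the hard part is Step 3's use of the tabulated $C_1$. A naive application of the facet form of the lemma uses the $n$ cube facets through $v_1$, each of area $2^{n-1}$ with $\langle n_j,u_1\rangle=1/\sqrt n$. That gives $2^{n-1}/\sqrt n$, which agrees with $\sqrt n$ only for $n=1,2$. Moreover, moving a single vertex of a cube bends its incident facets, so the simplicial star must be chosen carefully. I would first try to fix the triangulation of $\mathrm{St}(v_1)$ and the normalization (per vertex versus per antipodal pair, and the $1/n!$ factor) that the paper intends, and check that it produces $\sqrt n$. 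If no consistent normalization does, the identity must instead be read with $C_1$ defined by the stationarity equation itself. In that case the proposition reduces to the consistency checks of Steps 1 and 2.
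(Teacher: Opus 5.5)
Your Steps 1 and 2 are correct and cover the same ground as the paper. Your pyramid identity $\frac1n\cdot\frac{1}{\sqrt n}\,S_1^\circ=\frac{1}{n!}$ replaces the paper's Cayley--Menger computation of $S_1^\circ=\sqrt n/(n-1)!$. Your consecutive ratio $4/(n+1)$ is the paper's transition factor, and it settles the peak at $n=3,4$ more cleanly than the table does.

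The gap is in Step 3, exactly where you flagged it. You import $C_1=\sqrt n$ from the table, and this is the only non-trivial input to the stationarity claim. The paper supplies it through Euler's relation for the degree-$n$ homogeneous volume, $\sum_{i=1}^{2^n}R_i\,\partial\mathrm{vol}/\partial R_i=n\,\mathrm{vol}(\mathcal{C}_n)$. Combined with vertex-transitivity ($R_i=\sqrt n$, $C_i=C_1$), this gives $2^n\sqrt n\,C_1=n\,2^n$. Switching to the per-pair normalization doubles both $C_1$ and $S_1^\circ$, so it cannot change anything.

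Your doubt about that value is nevertheless well founded. For $n\ge 3$, neither your triangulation search nor the paper's Euler step can close the gap, because $\partial\mathrm{vol}/\partial R_1$ does not exist at $\mathcal{C}_n$. Move $v_1=\mathbf{1}$ to $(1+t)\mathbf{1}$.

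\textbf{Right derivative.} For $t>0$ the hull is $\mathcal{C}_n$ plus pyramids of height $t$ over the $n$ incident facets. So $\Delta\mathrm{vol}=2^{n-1}t$, and the right derivative in $R_1$ is exactly your facet value $2^{n-1}/\sqrt n$.

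\textbf{Left derivative.} For $t<0$ the hull is $\mathcal{C}_n\cap\{\sum_j x_j\le n-2\}$ together with the cone from $(1+t)\mathbf{1}$ over the corner simplex $\mathrm{conv}\{\mathbf{1}-2e_j\}$. So $\Delta\mathrm{vol}=2^{n-1}t/(n-1)!$, and the left derivative is $2^{n-1}/((n-1)!\sqrt n)$.

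These agree, and equal $\sqrt n$, only for $n\le 2$. For $n=3,4,5$ they strictly bracket $\sqrt n$. So Lemma~\ref{lem:grad_primal} has no derivative to compute here, whatever triangulation or normalization you choose.

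The dual side is a facet translation and is genuinely differentiable, with derivative $-S_1^\circ/R_1^2$. Hence the one-sided radial derivatives of $\mathcal{V}_M$ at $\mathcal{C}_n$ are $\frac{2^n}{n!\sqrt n}(2^{n-1}-n)>0$ on the right and $\frac{2^n}{n!\sqrt n}\cdot\frac{2^{n-1}-n!}{(n-1)!}<0$ on the left. Moving $\pm v_1$ together, as the definition requires, only doubles both. This is a kink minimum along the radial line, not the vanishing partial derivative demanded by Definition~\ref{def:estacionariedad_mahler}.

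Euler's theorem presupposes differentiability. What the paper's step actually computes is the equal share, across vertices, of the directional derivative along the scaling direction $\mathbf{1}$, which does exist. With $C_1$ defined that way, the identity $n\sqrt n=n\sqrt n$ follows from the $0$-homogeneity of $\mathcal{V}_M$ plus symmetry, and nothing more. Since $\mathrm{vol}(\mathcal{P}^\circ)=\frac1n\sum_i S_i^\circ/R_i$, it holds verbatim for every centrally symmetric polytope whose isometry group is vertex-transitive. Your fallback of defining $C_1$ through the KKT equation itself is circular in the same way.

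For $n=3,4,5$, what can honestly be proved is the one-sided statement: $0$ lies strictly between the left and right radial derivatives of $\mathcal{V}_M$. Exact stationarity in the paper's sense holds only for $n\le 2$.
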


\begin{proof}
The parameters in Table \ref{tab:dimensional_evaluation} follow from Euler's homogeneous partition for the primal volume and the Cayley-Menger formula for the polar simplex measure. Substituting these into equation \eqref{eq:mahler_kkt} analytically reduces in all cases to the identity \(n\sqrt{n} \equiv n\sqrt{n}\), confirming exact stationarity without approximations.
\end{proof}

\begin{remark}[Combinatorial Rigidity and the Hanner Fixed Point]
The sequence reveals the fundamental role of transverse geometry. While \(\mathbb{R}^1\) is a trivial invariant, from \(\mathbb{R}^2\) onwards the KKT condition acts as a strict geometric discriminant. The exact conservation of the critical volume between \(\mathbb{R}^3\) and \(\mathbb{R}^4\) (\(\mathcal{V}_M = 32/3\)) marks the fixed point of the Hanner dimensional transition operator \(\mathcal{T}_n = 4/n\). From \(n \ge 5\) onwards, the factorial decay of the orthoplex volume asymptotically dominates the exponential growth of the hypercube.
\end{remark}

\begin{figure}[htbp]
    \centering
    \begin{tikzpicture}
        \begin{axis}[
            title={Dimensional Evolution of the Mahler Functional $\mathcal{V}_M(\mathcal{C}_n)$},
            xlabel={Dimension $n$},
            ylabel={$\mathcal{V}_M(\mathcal{C}_n) = \frac{4^n}{n!}$},
            xmin=0.5, xmax=8.5,
            ymin=0, ymax=14,
            xtick={1,2,3,4,5,6,7,8},
            ytick={0,2,4,6,8,10,12},
            grid=both,
            grid style={line width=.1pt, draw=gray!20},
            major grid style={line width=.2pt,draw=gray!50},
            width=10cm,
            height=7cm,
            mark options={mark size=2.5pt}
        ]
        
        \addplot[
            color=blue!70!black,
            mark=*,
            thick,
            nodes near coords,
            every node near coord/.append style={anchor=south west, font=\footnotesize}
        ] coordinates {
            (1, 4)
            (2, 8)
            (3, 10.666)
            (4, 10.666)
            (5, 8.533)
            (6, 5.688)
            (7, 3.250)
            (8, 1.625)
        };
        
        \node[above, fill=white, inner sep=2pt, font=\small, text=red!70!black] at (axis cs:3.5, 12.4) {Transitional peak: $\mathcal{V}_M = \frac{32}{3}$};
        
        \draw[red!70!black, thick, dashed] (axis cs:3, 10.666) -- (axis cs:4, 10.666);
        
        \end{axis}
    \end{tikzpicture}
    \caption{Behavior of the Mahler functional sequence for the hypercube-orthoplex pair. The \(4/n\) transition ratio induces a volumetric peak at \(n=3\) and \(n=4\), prior to the asymptotic descent governed by the factorial term.}
    \label{fig:meseta_mahler}
\end{figure}

\section{The Generalization in $\mathbb{R}^n$: The Asymptotic Theorem and the Dimensional Invariant}
\label{sec:Rn}

Having evaluated the behavior of the functional from the base case in $\mathbb{R}^1$ to the asymptotic decrease initiated in $\mathbb{R}^5$, we proceed to unify the formulation under a single general variational identity. In Euclidean space $\mathbb{R}^n$, we demonstrate that the quadratic radial parameter compensates for the dual polyhedral metric in such a way that the KKT stationarity condition structurally depends on the dimension of the space, which operates as the invariant parameter of the equation.

\begin{theorem}[Generalized Stationarity and Radial Minimality Modulo $GL(n)$]
\label{thm:mahler_Rn}
In Euclidean space $\mathbb{R}^n$, the topological pair formed by the hypercube $\mathcal{C}_n$ and the orthoplex $\mathcal{O}_n$ rigorously satisfies the KKT stationarity conditions for all dimensions $n$. This configuration reaches the critical volume:
\begin{equation}
\mathcal{V}_M(\mathcal{C}_n) = \frac{4^n}{n!}
\end{equation}
Furthermore, the Hanner orbit generated by $\mathcal{C}_n$ constitutes a strict local minimum in the parametric stratum of radial dilations that preserve central symmetry, modulo the action of the affine group $GL(n, \mathbb{R})$.
\end{theorem}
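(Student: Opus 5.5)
The plan has two parts: the stationarity claim and the critical value, which I expect to follow from symmetry plus Euler homogeneity, and the radial minimality claim, which I intend to read off from Theorem \ref{thm:hessiano_mahler} after clarifying what ``modulo $GL(n,\mathbb{R})$'' means in the radial stratum.

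\textbf{Critical value.} I will first dispose of $\mathcal{V}_M(\mathcal{C}_n)=4^n/n!$ directly. We have $\mathrm{vol}(\mathcal{C}_n)=2^n$ for $\mathcal{C}_n=[-1,1]^n$, and $\mathcal{C}_n^\circ=\mathcal{O}_n$ is the union of $2^n$ orthant simplices of volume $1/n!$. Hence $\mathrm{vol}(\mathcal{O}_n)=2^n/n!$, and the product is $4^n/n!$.

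\textbf{Stationarity.} Rather than computing $C_i$ and $S_i^\circ$ separately through Cayley--Menger as in Proposition \ref{prop:sintesis_dimensional}, I will use homogeneity.
\begin{itemize}
\item As a function of the radial vector $\mathbf{R}\in\mathbb{R}^N$ with $N=2^{n-1}$, $\mathrm{vol}(\mathcal{P})$ is homogeneous of degree $n$. Euler's relation together with Lemma \ref{lem:grad_primal} gives $\sum_i R_i C_i = n\,\mathrm{vol}(\mathcal{P})$.
\item The polar volume is homogeneous of degree $-n$, so Lemma \ref{lem:grad_dual} gives $\sum_i S_i^\circ/R_i = n\,\mathrm{vol}(\mathcal{P}^\circ)$.
\item The hyperoctahedral group acts transitively on the antipodal vertex pairs of $\mathcal{C}_n$. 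It preserves $\mathcal{C}_n$ and $\mathcal{O}_n$, so all $R_i$, all $C_i$ and all $S_i^\circ$ coincide.
\end{itemize}
The two Euler identities then give $R_iC_i=n\,\mathrm{vol}(\mathcal{C}_n)/N$ and $S_i^\circ/R_i=n\,\mathrm{vol}(\mathcal{O}_n)/N$. Dividing one by the other yields exactly the stationarity identity $R_i^2C_i=(\mathrm{vol}(\mathcal{P})/\mathrm{vol}(\mathcal{P}^\circ))S_i^\circ$ for every $i$ and every $n$. This recovers the values $R_i=\sqrt n$, $C_i=\sqrt n$ in Table \ref{tab:dimensional_evaluation} without any facet-area computation.

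\textbf{Local minimality in the radial stratum.} Here I will invoke Theorem \ref{thm:hessiano_mahler}. The Hessian at $\mathbf{R}=\sqrt n\,\mathbf{1}$ is $|B|L$, where $L$ is the Laplacian of the connected vertex graph of the quotient. Its kernel is $\mathrm{span}\{\mathbf{1}\}$, and this direction is the uniform dilation, which lies in the $GL(n,\mathbb{R})$-orbit. Hence on the complement $\mathbf{1}^\perp$ the form is positive definite with constant $|B|\lambda_1(L)>0$. A second-order Taylor expansion, with remainder controlled on a compact neighborhood, then gives
\[
\mathcal{V}_M(\mathbf{R})-\mathcal{V}_M(\mathcal{C}_n)\ge c\,\|\pi_{\mathbf{1}^\perp}\delta\mathbf{R}\|^2 .
\]
Because $\mathcal{V}_M$ is constant along the scaling line, this is exactly strict local minimality modulo scaling, and hence modulo the affine action restricted to the radial stratum.

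\textbf{Expected obstacle: regularity.} The formulas of Lemmas \ref{lem:grad_primal} and \ref{lem:grad_dual} presuppose a fixed combinatorial type. A radial perturbation of a cube vertex generically breaks each square $2$-face into two triangles, so $\mathrm{vol}(\mathcal{P})$ is only piecewise polynomial near $\sqrt n\,\mathbf{1}$. I would handle this in two steps. First, I would show that the second-order expansion agrees on each chamber of the common refinement of fan structures, with the triangulation of $\mathrm{St}(v_i)$ chosen consistently. Second, I would argue that, since the first derivatives $C_i$ are continuous across chambers, the one-sided Hessians all dominate $|B|L$.

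\textbf{Expected obstacle: the case $n=2$.} Every radial perturbation of a square is a parallelogram, so $\mathcal{V}_M$ is constant on the whole radial stratum. In that case the Hessian vanishes, contradicting $|B|>0$ as asserted in Theorem \ref{thm:hessiano_mahler}, so the lower bound on $A$ there needs rechecking. I would therefore state the $n=2$ case separately, as minimality in the trivial sense that the stratum is a single orbit. I would also verify, for $n\ge3$, that the bound $A>0$ genuinely holds, and not merely the sign claim $\partial S_i^\circ/\partial R_i\le0$, possibly by computing $\partial^2\mathrm{vol}(\mathcal{P}^\circ)/\partial R_i^2$ directly from the mixed-volume formula.
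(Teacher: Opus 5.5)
Your plan is essentially the paper's proof: Euler homogeneity pins down $C_1=\sqrt n$, and Theorem~\ref{thm:hessiano_mahler} gives minimality. Replacing Cayley--Menger by the dual Euler identity is a harmless variant, and that identity is legitimate because $\mathrm{vol}(\mathcal{P}^\circ)$ is $C^1$ in the support numbers $1/R_i$. The critical value is also fine.

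The gap is the primal Euler identity $\sum_i R_iC_i=n\,\mathrm{vol}(\mathcal{P})$. You rightly sense the regularity problem, but your proposed fix, that the first derivatives $C_i$ are continuous across chambers, is false. For $n\ge 3$ the map $\mathbf{R}\mapsto\mathrm{vol}(\mathcal{P})$ is not differentiable at $\sqrt n\,\mathbf{1}$, so Euler's relation cannot be applied to partial derivatives.

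Concretely, pushing $v=(1,\dots,1)$ out to $(1+t)v$ adds pyramids over the $n$ incident facets. This gives the one-sided rate $2^{n-1}/\sqrt n$ per vertex, which is exactly the value of the formula in Lemma~\ref{lem:grad_primal} ($n$ facets of measure $2^{n-1}$, $\langle n_j,u_i\rangle=1/\sqrt n$). Pulling $v$ in to $(1-t)v$ only rebuilds the corner over the hyperplane $\sum_k x_k=n-2$, giving $2^{n-1}/\bigl((n-1)!\sqrt n\bigr)$. With the lemma's value the stationarity identity would read $2^{n-1}\sqrt n=n\sqrt n$, which is false for $n\ge 3$.

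The number $\sqrt n$ is only the average of the gradients of smooth branches attached to fixed boundary triangulations. None of those triangulations is symmetric, and that is where ``all $C_i$ coincide'' breaks. For a pair parameter, the one-sided derivatives of $\mathcal{V}_M$ are $\frac{2^n}{\sqrt n\,n!}(2^n-2n)>0$ from the right and $\frac{2^n}{(n-1)!\sqrt n}\bigl(\frac{2^n}{n!}-2\bigr)<0$ from the left. So $\partial\mathcal{V}_M/\partial R_i$ does not exist, and stationarity in the sense of Definition~\ref{def:estacionariedad_mahler} fails for $n\ge 3$. The paper's proof takes the same Euler step, so following it will not close this gap.

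For the same reason the minimality step cannot run through a Hessian: for $n\ge3$, on each smooth chamber the gradient at $\sqrt n\,\mathbf{1}$ is nonzero. Theorem~\ref{thm:hessiano_mahler} is unreliable in any case. Its formula for $A$ mixes per-vertex and per-pair derivatives, and at $n=2$ it yields $A=2$ although $\mathcal{V}_M$ is constant on the stratum, as you suspected.

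What does work is a first-order argument. For $\delta\in\mathbb{R}^N$, the one-sided directional derivative of $\mathcal{V}_M$ at $\sqrt n\,\mathbf{1}$ is
\[
D(\delta)=\frac{2^n}{n!}\Bigl(\sum_F\int_F\mathrm{env}_F-2\sqrt n\sum_i\delta_i\Bigr).
\]
Here $\mathrm{env}_F$ is the upper concave envelope, on the facet $F$ of $\mathcal{C}_n$, of the lifts $\delta_{i(w)}/\sqrt n$ at its vertices $w$, where $i(w)$ is the antipodal pair containing $w$. The multilinear weights on $F$ give an admissible convex representation, so $\mathrm{env}_F$ dominates the multilinear interpolant, whose integrals sum to $2\sqrt n\sum_i\delta_i$. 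Hence $D\ge 0$, with equality only if the lifts are affine on every facet.

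For $n\ge 3$, an even function on $\{\pm1\}^n$ that is affine on every facet is constant. Therefore $D>0$ on $\mathbf{1}^\perp\setminus\{0\}$, and $D$ is unchanged by adding multiples of $\mathbf{1}$. Scale invariance then gives $\mathcal{V}_M(\mathbf{R})-\mathcal{V}_M(\mathcal{C}_n)\ge c\,\|\pi_{\mathbf{1}^\perp}\delta\mathbf{R}\|$ near $\sqrt n\,\mathbf{1}$.

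So radial minimality modulo $GL(n,\mathbb{R})$ does hold, but as a nonsmooth conical minimum with linear growth. The cube is ``stationary'' only in the generalized sense that $0$ lies in the convex hull of the chamber gradients; it is not a critical point with a Laplacian Hessian. Your separate treatment of $n=2$ is correct.
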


\begin{proof}
We formulate the stationarity condition by evaluating the reference orthogonal vertex $v_1(1, \dots, 1)$ of the hypercube $\mathcal{C}_n$ and its corresponding polar facet in the orthoplex $\mathcal{O}_n$.

1. \textbf{Primal Evaluation in $\mathbb{R}^n$ ($\mathcal{C}_n$):}
In $\mathbb{R}^n$, the radial immersion of the vertex dictates a quadratic parameter $R_1^2 = \sum_{j=1}^n 1^2 = n$, thus $R_1 = \sqrt{n}$. The Euclidean hyper-volume of the canonical cube of edge 2 is $\mathrm{vol}(\mathcal{C}_n) = 2^n$.

To isolate the transverse variation factor $C_1$, we invoke Euler's Theorem for homogeneous functions. Since the hyperspace volume is homogeneous of degree $n$ with respect to the $2^n$ vertices of $\mathcal{C}_n$, and given the isometric symmetry where all radii and gradients are identical ($R_i = \sqrt{n}$ and $C_i = C_1$), the algebraic development dictates:
\begin{align*}
\sum_{i=1}^{2^n} R_i \frac{\partial \mathrm{vol}(\mathcal{P})}{\partial R_i} &= n \cdot \mathrm{vol}(\mathcal{P}) \\
2^n \cdot (\sqrt{n}) \cdot C_1 &= n \cdot (2^n) \\
\sqrt{n} \cdot C_1 &= n \\
C_1 &= \frac{n}{\sqrt{n}} = \sqrt{n}
\end{align*}
Concluding this step, the primal coupling term is evaluated exactly as: $R_1^2 C_1 = n\sqrt{n}$.

2. \textbf{Dual Evaluation in $\mathbb{R}^n$ ($\mathcal{O}_n$):}
The reciprocal polar body, the orthoplex $\mathcal{O}_n$, has a global volume determined by the combinatorics of its $2n$ axial vertices: $\mathrm{vol}(\mathcal{O}_n) = \frac{2^n}{n!}$. This defines the global volumetric multiplier:
\begin{equation}
\frac{\mathrm{vol}(\mathcal{C}_n)}{\mathrm{vol}(\mathcal{O}_n)} = \frac{2^n}{2^n / n!} = n!
\end{equation}

The dual facet conjugate to $v_1$ (denoted $S_1^\circ$) is a regular simplex of dimension $(n-1)$ immersed in the polar hyperplane, whose edges measure uniformly $\sqrt{2}$. We evaluate its Euclidean hyper-area using the Cayley-Menger formula for a regular $k$-simplex of edge $a$, given by $V_k = \frac{a^k}{k!}\sqrt{\frac{k+1}{2^k}}$.
Substituting the codimension $k = n-1$ and $a = \sqrt{2}$, we break down the exact measure:
\begin{align*}
S_1^\circ &= \frac{(\sqrt{2})^{n-1}}{(n-1)!} \sqrt{\frac{(n-1)+1}{2^{n-1}}} \\
&= \frac{2^{\frac{n-1}{2}}}{(n-1)!} \cdot \frac{\sqrt{n}}{\sqrt{2^{n-1}}} \\
&= \frac{2^{\frac{n-1}{2}}}{(n-1)!} \cdot \frac{\sqrt{n}}{2^{\frac{n-1}{2}}} \\
&= \frac{\sqrt{n}}{(n-1)!}
\end{align*}

3. \textbf{KKT Stationarity Identity:}
We subject the primal and dual evaluation to the fundamental equilibrium equation $\delta \mathcal{V}_M = 0$. Substituting the derived terms explicitly:
\begin{align*}
R_1^2 \cdot C_1 &= \left( \frac{\mathrm{vol}(\mathcal{C}_n)}{\mathrm{vol}(\mathcal{O}_n)} \right) S_1^\circ \\
n \cdot \sqrt{n} &= (n!) \cdot \left( \frac{\sqrt{n}}{(n-1)!} \right)
\end{align*}
Developing the definition of the factorial operator ($n! = n \cdot (n-1)!$), the equation algebraically collapses:
\begin{align*}
n\sqrt{n} &= \frac{n \cdot (n-1)!}{(n-1)!} \sqrt{n} \\
n\sqrt{n} &\equiv n\sqrt{n}
\end{align*}
The first variation is strictly zero. The codimensional metric gradients are exactly compensated, confirming that $\mathcal{C}_n$ resides at a critical point of the manifold $\mathcal{K}_0^n$.

4. \textbf{Strict Minimality in the Radial Stratum:}
By restricting the space of parametric deformations to the subspace of purely radial immersions (of dimension $2^{n-1}$), the radial Hessian operator $\mathcal{H}$ is a positive semi-definite Graph Laplacian (Theorem \ref{thm:hessiano_mahler}). The exact one-dimensional kernel of this analytical operator captures the uniform scaling orbit of the affine foliation $GL(n, \mathbb{R})$. Consequently, when projecting the system onto the quotient space modulo this affine action, the quadratic form is strictly positive for any non-zero radial perturbation.

This rigorously guarantees that configuration $\mathcal{C}_n$ operates as a strict local minimum within the subvariety of purely radial deformations. The extension of this minimality to the full polyhedral space (including combinatorial mutations) is obtained by coupling the radial result with the Truncation Lemma \ref{lem:truncamiento}.
\end{proof}

\begin{remark}[The Dimensional Parameter as an Analytical Invariant]
The reduction of the stationarity equation demonstrates that the quadratic radial parameter of the primal vertex ($R_1^2 = n$) and the coupling coefficient of the dual hyper-simplex algebraically converge toward the dimension of the space. This proves that, in variational equilibrium, the KKT stationarity condition is exactly satisfied at the value $n$. The dimension of the Euclidean space ($n$) does not operate as a mere scale parameter, but as the fundamental analytical invariant that balances the system. When geometric perturbations attempt to smooth the polyhedral boundary, this discrete structure reacts by inducing a strict increase in the functional, confirming the rigidity of the Hanner orbit as a strict and topologically isolated local minimum of the Mahler conjecture.
\end{remark}

\section{The Truncation Lemma and Topological Isolation}
\label{sec:truncation}

Having established that the pair $\{\mathcal{C}_n, \mathcal{O}_n\}$ satisfies the KKT stationarity condition, we proceed to topologically characterize this state in the metric space $\mathcal{K}_0^n$. We address this characterization by analyzing the local variation of the Mahler functional $\mathcal{V}_M$ under polyhedral perturbations, demonstrating that the discrete structure ensures the robust isolation of the Hanner orbit.

\begin{lemma}[Local monotony under vertex truncation]
\label{lem:truncamiento}
Let \(\mathcal{P} \in \mathcal{K}_0^n\) (\(n \ge 2\)) be a centrally symmetric convex polytope. Let \(v_i\) be a simple vertex of \(\mathcal{P}\) (incident to exactly \(n\) edges, as in the hypercube \(\mathcal{C}_n\)) at radial distance \(R_i > 0\). For all sufficiently small \(\epsilon > 0\), the symmetric truncation of \(v_i\) by a hyperplane orthogonal to its radial vector, at depth \(\epsilon\), satisfies
\[
\Delta \mathcal{V}_M(\mathcal{P}_\epsilon) > 0,
\]
where \(\mathcal{P}_\epsilon\) denotes the truncated polytope and \(\mathcal{V}_M(\mathcal{P}) = \mathrm{vol}(\mathcal{P}) \cdot \mathrm{vol}(\mathcal{P}^\circ)\).
\end{lemma}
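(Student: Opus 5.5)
The plan is a direct first-order computation. The primal volume loss from the truncation is of order $\epsilon^n$, while the dual volume gain is of order $\epsilon$. For $n\ge 2$ the linear dual gain then dominates as $\epsilon\to 0^+$, which is the mechanism announced in the introduction.

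\textbf{Step 1 (primal side).} Write $u_i=v_i/R_i$ and let $\mathcal{P}_\epsilon=\mathcal{P}\cap\{x : |\langle x,u_i\rangle|\le R_i-\epsilon\}$. Since $v_i$ and $-v_i$ are vertices exposed in direction $\pm u_i$, for $\epsilon$ small the hyperplane $\langle x,u_i\rangle=R_i-\epsilon$ meets only the $n$ edges emanating from $v_i$. Simplicity of $v_i$ gives the local cone at $v_i$ exactly $n$ edge directions $e_1,\dots,e_n$. Hence the removed cap is the $n$-simplex $\mathrm{conv}\{v_i,\,v_i+t_k e_k\}$ with $t_k=\epsilon/|\langle e_k,u_i\rangle|$. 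Its volume is exactly $\kappa_i\epsilon^n$, where $\kappa_i=\frac{1}{n!}|\det(e_1,\dots,e_n)|\prod_k|\langle e_k,u_i\rangle|^{-1}>0$ depends only on the local geometry at $v_i$. By central symmetry,
\[
\mathrm{vol}(\mathcal{P}_\epsilon)=\mathrm{vol}(\mathcal{P})-2\kappa_i\epsilon^n .
\]

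\textbf{Step 2 (dual side).} Polarity converts intersection with a slab into a convex hull:
\[
\mathcal{P}_\epsilon^\circ=\mathrm{conv}\bigl(\mathcal{P}^\circ,\ \pm w_\epsilon\bigr),\qquad w_\epsilon=\frac{u_i}{R_i-\epsilon}.
\]
The vertex $v_i$ corresponds to the facet $F_i^\circ=\mathcal{P}^\circ\cap\{\langle v_i,y\rangle=1\}$ of measure $S_i^\circ>0$, at distance $1/R_i$ from the origin, as in Lemma \ref{lem:grad_dual}. Since $\langle v_i,w_\epsilon\rangle=R_i/(R_i-\epsilon)>1$, the point $w_\epsilon$ lies beyond $F_i^\circ$. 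The key claim is that for $\epsilon$ small it lies beneath every other facet hyperplane $\langle v_j,y\rangle=1$, $j\ne i$. This holds because $\langle v_j,u_i\rangle/R_i<1$ strictly, as $v_i$ is a vertex distinct from $v_j$, and this strict inequality persists under a small perturbation. Hence the hull adds exactly the pyramid over $F_i^\circ$ with apex $w_\epsilon$, and its antipode. The pyramid height is $\frac{1}{R_i-\epsilon}-\frac{1}{R_i}=\frac{\epsilon}{R_i(R_i-\epsilon)}$, so
\[
\mathrm{vol}(\mathcal{P}_\epsilon^\circ)=\mathrm{vol}(\mathcal{P}^\circ)+\frac{2S_i^\circ\,\epsilon}{nR_i(R_i-\epsilon)} .
\]
This agrees to first order with Lemma \ref{lem:grad_dual}, read as a one-sided variation. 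I would present this consistency as a check, not as the proof.

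\textbf{Step 3 (combination).} Multiplying the two formulas gives
\[
\Delta\mathcal{V}_M=\frac{2\,\mathrm{vol}(\mathcal{P})\,S_i^\circ}{nR_i(R_i-\epsilon)}\,\epsilon-2\kappa_i\,\mathrm{vol}(\mathcal{P}^\circ)\,\epsilon^n-\frac{4\kappa_i S_i^\circ}{nR_i(R_i-\epsilon)}\,\epsilon^{n+1}.
\]
The leading coefficient is strictly positive, and $n\ge 2$ makes the negative terms $o(\epsilon)$. Therefore $\Delta\mathcal{V}_M>0$ for all sufficiently small $\epsilon$. An explicit threshold, such as $\epsilon^{n-1}<\mathrm{vol}(\mathcal{P})S_i^\circ/(2nR_i^2\kappa_i\mathrm{vol}(\mathcal{P}^\circ))$ together with $\epsilon<R_i/2$, can be extracted.

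The main obstacle is Step 2: establishing that the new dual vertex lies beyond exactly one facet, so that the hull adds precisely one pyramid and no other facets are affected. I would handle it through the finite list of strict inequalities $\langle v_j,u_i\rangle<R_i$ for $j\neq i$, and take $\epsilon$ below the minimum slack. Simplicity of $v_i$ is used only in Step 1, to get the exact simplex cap; without it the cap still has volume $\Theta(\epsilon^n)$, so the conclusion is robust.
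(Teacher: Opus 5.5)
Your strategy is the paper's own: a simplex cap of volume $\Theta(\epsilon^n)$ in the primal against a pyramid of height $\Theta(\epsilon)$ over $F_i^\circ$ in the dual. However, the step you identify as the crux is justified incorrectly. The inequality $\langle v_j,u_i\rangle<R_i$, i.e.\ $\langle v_j,v_i\rangle<|v_i|^2$ for every vertex $v_j\neq v_i$, does not follow from $v_i$ being a vertex. A vertex is exposed by \emph{some} direction, not necessarily by its own radial direction.

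For instance, let $\mathcal{P}=\mathrm{conv}\{\pm(1,0),\pm(10,1)\}\subset\mathbb{R}^2$ and $v_i=(1,0)$, a simple vertex with $R_i=1$. Then $\langle(10,1),u_i\rangle=10>R_i$. For every $0<\epsilon<1$ the slab $|x_1|\le 1-\epsilon$ removes all four vertices. So $\mathcal{P}_\epsilon$ is a centrally symmetric quadrilateral, hence a parallelogram, and $\Delta\mathcal{V}_M=8-8=0$. Dually, $w_\epsilon$ lies beyond two facets of $\mathcal{P}^\circ$, not one, and the removed region in Step 1 is not a simplex at $v_i$. So the exposure of $\pm v_i$ in direction $\pm u_i$, asserted in Step 1 and ``proved'' in Step 2, is an extra hypothesis rather than a consequence. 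Without it the lemma as stated is false, and no argument can close the gap.

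The paper's proof relies on the same tacit assumption: it simply asserts that the cut ``removes exactly an $n$-simplex'' and that the dual gains one pyramid over $S_i^\circ$. The repair is to assume that $v_i$ is the unique maximizer of $\langle\cdot,u_i\rangle$ on $\mathcal{P}$. Equivalently, the foot $u_i/R_i$ of the perpendicular from the origin lies in the relative interior of $F_i^\circ$. This holds for $\mathcal{C}_n$, where $\langle v_j,v_i\rangle=n-2d\le n-2$, with $d\ge 1$ the number of coordinates in which $v_j$ and $v_i$ differ. More generally it holds whenever $v_i$ has maximal norm among the vertices. Under that hypothesis your Steps 1--3 are correct and sharper than the paper's: both caps are counted, and the formulas are exact rather than asymptotic.

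Drop the side remark about Lemma~\ref{lem:grad_dual}, though. That lemma describes a parallel translation of the hyperplane of $F_i^\circ$, with rate $S_i^\circ/R_i^2$. Truncation instead adds a pyramid, with rate $S_i^\circ/(nR_i^2)$. The two differ by the factor $n$, so they do not agree to first order.
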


\begin{proof}
Let \(\epsilon > 0\) be small. Since \(v_i\) is a simple vertex, the orthogonal truncation at depth \(\epsilon\) removes exactly an \(n\)-simplex whose height is \(\epsilon\). By geometric similarity, the \((n-1)\)-dimensional Euclidean measure of the simplex base scales proportionally to \(\epsilon^{n-1}\). Applying the pyramidal volume formula in \(\mathbb{R}^n\), the primal volume change is exactly:
\[
\Delta \mathrm{vol}(\mathcal{P}) = -k_p \epsilon^n, \qquad k_p = \frac{1}{n} \cdot \mathrm{vol}_{n-1}(\widetilde{B}_i) > 0,
\]
where \(\widetilde{B}_i\) is the transverse base normalized to unit depth.

In the dual polar domain, the truncation hyperplane immersed at distance \(R_i - \epsilon\) from the origin generates a new vertex in \(\mathcal{P}^\circ\) at distance \(1/(R_i - \epsilon)\). Developing in Taylor series:
\[
\frac{1}{R_i - \epsilon} = \frac{1}{R_i} + \frac{\epsilon}{R_i^2} + \mathcal{O}(\epsilon^2),
\]
the new vertex induces the addition of a pyramid whose orthogonal height relative to the original polar facet \(S_i^\circ\) is \(\frac{\epsilon}{R_i^2} + \mathcal{O}(\epsilon^2)\). The dual volume increase is, therefore, strictly linear in its dominant term:
\[
\Delta \mathrm{vol}(\mathcal{P}^\circ) = k_d \epsilon + \mathcal{O}(\epsilon^2), \qquad k_d = \frac{1}{n} \frac{S_i^\circ}{R_i^2} > 0.
\]

We evaluate the variation of the Mahler functional, including the second-order cross term:
\[
\Delta \mathcal{V}_M = \mathrm{vol}(\mathcal{P}^\circ) \Delta \mathrm{vol}(\mathcal{P}) + \mathrm{vol}(\mathcal{P}) \Delta \mathrm{vol}(\mathcal{P}^\circ) + \Delta \mathrm{vol}(\mathcal{P})\Delta \mathrm{vol}(\mathcal{P}^\circ).
\]
Substituting the factors and noting that the cross term is of order \(\mathcal{O}(\epsilon^{n+1})\), we obtain:
\[
\Delta \mathcal{V}_M = -\mathrm{vol}(\mathcal{P}^\circ) k_p \epsilon^n + \mathrm{vol}(\mathcal{P}) k_d \epsilon + \mathcal{O}(\epsilon^2).
\]
Factoring \(\epsilon\):
\[
\Delta \mathcal{V}_M = \epsilon \Bigl[ \mathrm{vol}(\mathcal{P}) k_d - \mathrm{vol}(\mathcal{P}^\circ) k_p \epsilon^{n-1} + \mathcal{O}(\epsilon) \Bigr].
\]
Since we are analyzing spaces of dimension \(n \ge 2\), the polynomial term \(\epsilon^{n-1} \to 0\) as \(\epsilon \to 0^+\). By rigorous continuity, there exists a \(\delta > 0\) such that for all \(0 < \epsilon < \delta\) the expression in brackets is strictly positive, dominated by the linear increase in dual volume. Consequently, \(\Delta \mathcal{V}_M > 0\).
\end{proof}

\begin{figure}[htbp]
    \centering
    \begin{tikzpicture}[scale=1.25, thick]
        \node at (0, 1.8) {\textbf{Primal Space ($\mathcal{P}$)}};
        \node at (0, 1.5) {Vertex truncation};
        
        \draw[gray, dashed] (-1,-1) rectangle (1,1);
        \filldraw[black] (1,1) circle[radius=1.5pt] node[above right] {$v_i$};
        \draw[->, blue, densely dotted] (0,0) -- (1,1) node[midway, above left] {$R_i$};
        
        \draw[blue, fill=blue!5] (-1,-1) -- (1,-1) -- (1, 0.6) -- (0.6, 1) -- (-1,1) -- cycle;
        \draw[red, very thick] (0.6, 1) -- (1, 0.6) node[midway, below left] {$\epsilon$};
        
        \draw[->, red] (0.8, 0.8) -- (0.5, 0.5) node[below] {$-\Delta \mathrm{vol}$};

        \begin{scope}[xshift=4.5cm]
            \node at (0, 1.8) {\textbf{Dual Space ($\mathcal{P}^\circ$)}};
            \node at (0, 1.5) {Dual vertex expansion};
            
            \draw[gray, dashed] (0,1) -- (1,0) -- (0,-1) -- (-1,0) -- cycle;
            \draw[blue, very thick] (0,1) -- (1,0) node[midway, above right] {$S_i^\circ$};
            
            \draw[red, fill=red!5] (0,1) -- (1.2, 1.2) -- (1,0) -- cycle;
            \filldraw[red] (1.2, 1.2) circle[radius=1.5pt] node[above right] {$v_i^\circ(\epsilon)$};
            
            \draw[->, red, densely dotted] (0.5, 0.5) -- (1.1, 1.1) node[midway, below right] {$+\Delta \mathrm{vol}$};
            
            \draw[gray, thin] (1,0) -- (0,-1) -- (-1,0) -- (0,1); 
        \end{scope}
    \end{tikzpicture}
    \caption{Geometric scheme of Lemma \ref{lem:truncamiento} (illustrated in $\mathbb{R}^2$). On the left, the orthogonal truncation of the primal vertex $v_i$ at depth $\epsilon$ removes a volume of order $\mathcal{O}(\epsilon^n)$. On the right, by polar duality, this operation induces the creation of a new vertex that increases the dual measure in linear order $\mathcal{O}(\epsilon)$, ensuring the strict increase of the functional $\mathcal{V}_M$.}
    \label{fig:truncamiento}
\end{figure}

\begin{theorem}[Strict Local Minimality in the Space of Polytopes]
\label{thm:minimo-local}
The hypercube $\mathcal{C}_n$ constitutes a strict and topologically isolated local minimum of the Mahler functional $\mathcal{V}_M$ within the space of centrally symmetric polytopes, modulo the action of the affine group $GL(n, \mathbb{R})$. There exists a universal constant $\kappa > 0$ and a radius $\epsilon_0 > 0$ such that, for any symmetric polytope $\mathcal{P}$ with a local Hausdorff deviation $\mathcal{D}(\mathcal{P}, \mathcal{C}_n) < \epsilon_0$ modulo affine equivalence, the following holds:
\begin{equation}
\mathcal{V}_M(\mathcal{P}) - \mathcal{V}_M(\mathcal{C}_n) \ge \kappa \, \mathcal{D}(\mathcal{P}, \mathcal{C}_n)^2 > 0,
\end{equation}
for all $\mathcal{P} \notin GL(n, \mathbb{R})\mathcal{C}_n$. By polar duality, the orthoplex $\mathcal{O}_n$ inherits the identical property of strict local minimality.
\end{theorem}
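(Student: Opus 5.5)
The plan is to split an arbitrary nearby symmetric polytope into a \emph{combinatorially equivalent} part, controlled by the second-order analysis of Section~\ref{sec:hessian}, and a \emph{mutation} part, controlled by first-order growth as in Lemma~\ref{lem:truncamiento}. Since the linear part dominates the quadratic one, the two can be glued. The gluing is the delicate point.

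\textbf{Step 1: normalize modulo $GL(n,\mathbb{R})$.} Given $\mathcal{P}$ with $\mathcal{D}(\mathcal{P},\mathcal{C}_n)<\epsilon_0$, I replace $\mathcal{P}$ by an affine image $T\mathcal{P}$ minimizing the Hausdorff distance to $\mathcal{C}_n$. This is possible by compactness of a neighbourhood of the identity in $GL(n,\mathbb{R})$. At the minimizer, the displacement field of the vertices of $\mathcal{C}_n$ is orthogonal, to first order, to the $n^2$-dimensional tangent space of the orbit $GL(n,\mathbb{R})\mathcal{C}_n$; this is the complete affine kernel described in the Remark on the affine null space.

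\textbf{Step 2: decompose $T\mathcal{P}$.} For $\epsilon_0$ small, each vertex $\pm(1,\dots,1)$-type vertex of $\mathcal{C}_n$ is approximated by a cluster of vertices of $T\mathcal{P}$. I write $T\mathcal{P}$ as a perturbation of a polytope $\mathcal{P}'$ combinatorially equivalent to $\mathcal{C}_n$, with vertices $v_i+\delta v_i$, followed by local mutations: truncations of the simple vertices and isovertex folding of the facets. These mutations have a total size $\tau$ comparable to the distance from $T\mathcal{P}$ to $\mathcal{P}'$, so that $\mathcal{D}\asymp |\delta v|+\tau$.

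\textbf{Step 3: the combinatorially rigid stratum.} For $\mathcal{P}'$, the first variation vanishes by the stationarity identity of Theorem~\ref{thm:mahler_Rn}. In the radial directions, Theorem~\ref{thm:hessiano_mahler} gives $\delta\mathbf{R}^T\mathcal{H}\,\delta\mathbf{R}\ge |B|\lambda_1\,|\delta\mathbf{R}_\perp|^2$, where $\lambda_1$ is the Fiedler value of the hypercube quotient graph.

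The tangential directions are not covered by Theorem~\ref{thm:hessiano_mahler}. I would therefore compute the full Hessian on $\mathbb{R}^{n\times N}$ using the Aleksandrov mixed-volume formulas already invoked there, restricted to the hypercube's symmetry. I would decompose $\mathbb{R}^{n\times N}$ into isotypic components under the hyperoctahedral group, so that each block is a small matrix. Positivity is then checked on the complement of the $n^2$-dimensional affine kernel.

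\textbf{Main obstacle.} I expect this full-Hessian step to be the main obstacle. The radial theorem alone gives nothing for the tangential moves. Moreover, for $n\ge 3$, moving a cube vertex tangentially generically destroys the planarity of its $2$-faces, so "combinatorially equivalent" must be understood after a further triangulation. I would first try to show that non-planar tangential moves are themselves mutations (folding), and hence produce linear growth. Only the face-preserving tangential moves would remain, and these essentially reduce to the affine orbit plus radial moves. This yields $\mathcal{V}_M(\mathcal{P}')-\mathcal{V}_M(\mathcal{C}_n)\ge \kappa_1|\delta v|^2$.

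\textbf{Step 4: mutations.} For the mutation part, I apply Lemma~\ref{lem:truncamiento} at each simple vertex of $\mathcal{P}'$. This gives an increase of at least $c\,\tau_i$ per truncation, with $c$ bounded below uniformly because $\mathcal{P}'$ is close to $\mathcal{C}_n$ and $S_i^\circ/R_i^2$ is continuous. For folding, i.e.\ new vertices pushed out near a facet of $\mathcal{C}_n$, I dualize: these truncate vertices of $\mathcal{O}_n$. Those vertices are not simple for $n\ge3$, so the lemma must be redone with the pyramid over the vertex figure, giving a primal increase linear in the displacement against an $\mathcal{O}(\tau^n)$ dual loss.

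\textbf{Step 5: gluing.} Summing, I obtain $\Delta\mathcal{V}_M\ge \kappa_1|\delta v|^2+c\tau-C|\delta v|\tau-C\tau^2$. For $\epsilon_0$ small, this is at least $\kappa(|\delta v|+\tau)^2\ge\kappa\,\mathcal{D}^2$. Strictness follows because equality forces $\tau=0$ and $\delta v\in$ the affine kernel, i.e.\ $\mathcal{P}\in GL(n,\mathbb{R})\mathcal{C}_n$. The statement for $\mathcal{O}_n$ follows from $\mathcal{V}_M(K)=\mathcal{V}_M(K^\circ)$ and the local bi-Lipschitz continuity of polarity in the Hausdorff metric near $\mathcal{C}_n$.

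The uniformity of the constants $c$ and $C$ over all mutation patterns is the second point I would need to check carefully.
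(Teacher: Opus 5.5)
\textbf{There is a genuine gap, at the two points you flag yourself.} Your architecture is the paper's: a quadratic ``radial'' regime from Theorem~\ref{thm:hessiano_mahler}, linear growth from mutations, then gluing. The paper closes those two points only by assertion, so citing it does not help.

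The first problem is Step~3: for $n\ge 3$ the quadratic stratum does not exist. A centrally symmetric polytope with the face lattice of $\mathcal{C}_n$ has $n$ pairs of parallel facets, so it is a parallelotope in $GL(n,\mathbb{R})\mathcal{C}_n$. Since each facet carries $2^{n-1}>n$ vertices, every non-affine vertex displacement folds some facet, and this includes purely radial ones. In your own dichotomy, radial moves therefore fall on the folding side, and only the affine orbit remains on the face-preserving side.

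Consequently $\mathcal{V}_M$ is not even differentiable at $\mathcal{C}_n$ along a radial line. Push the pair $\pm v_1$ radially by $t$. Then $\mathrm{vol}(\mathcal{C}_n)$ changes by $2^n t/\sqrt{n}$ for $t>0$ but by $2^n t/(\sqrt{n}\,(n-1)!)$ for $t<0$, while $\mathrm{vol}(\mathcal{O}_n)$ changes by $-2t/(\sqrt{n}\,(n-1)!)+O(t^2)$. So $\Delta\mathcal{V}_M$ is positive and linear in $|t|$ on both sides; for $n=3$ it equals $\tfrac{8}{3\sqrt{3}}|t|+O(t^2)$. A vanishing first variation, a second-order Taylor expansion, and a ``full Hessian on $\mathbb{R}^{n\times N}$'' are all unavailable. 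For $n=2$, radial moves stay among parallelograms, so the radial Hessian is $0$, not $|B|L$.

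The cube is a first-order \emph{kink} minimum. What you actually need is a bound $\Phi(W)\ge c\,\mathrm{dist}(W,\mathcal{A})$, where:
\begin{itemize}
\item $\Phi(W)$ is the one-sided first variation of $\mathcal{V}_M$ along a vertex displacement field $W$;
\item $\mathcal{A}=\{(Av_i)_i: A\in\mathbb{R}^{n\times n}\}$ is the space of affine directions.
\end{itemize}
$\Phi$ is convex and positively homogeneous. Its dual part is linear. On each facet $F$, its primal part is the integral over $F$ of the concave envelope of the normal components of the displacements of the vertices of $F$. Such a bound gives linear growth, which implies the quadratic estimate and covers the radial case.

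The second problem is Steps~2, 4 and~5, which carry the real difficulty but are only asserted. You decompose the perturbation into $\mathcal{P}'$ plus mutations of total size $\tau$ with $\mathcal{D}\asymp|\delta v|+\tau$, and you assume a uniform gain $c\tau_i$ per mutation. Three things break this.
\begin{itemize}
\item Polytopes Hausdorff-close to $\mathcal{C}_n$ can have arbitrarily many vertices, for example a thin faceted cap over a facet. So there is no finite list of combinatorial types to run compactness over, and the paper's finiteness claim is false.
\item First-order gains of separate mutations do not add at $\mathcal{C}_n$, because $\Phi$ is only subadditive. For $n\ge3$, pushing all $2^{n-1}$ pairs outward by the same $t$ is a dilation with $\Delta\mathcal{V}_M=0$, yet each single-pair push gains a fixed multiple of $|t|$. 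Interactions between mutations sharing a facet are therefore first order. They are not $O(|\delta v|\tau+\tau^2)$ as in your Step~5, nor $O(\epsilon^2)$ as in the paper.
\item Lemma~\ref{lem:truncamiento} controls a single cut at a fixed vertex as its depth tends to $0$. It is not uniform in the shape or the number of cuts.
\end{itemize}
Closing the argument requires a direct estimate of $\mathrm{vol}(\mathcal{P})$ and $\mathrm{vol}(\mathcal{P}^\circ)$ against $\mathrm{vol}(\mathcal{C}_n)$ and $\mathrm{vol}(\mathcal{O}_n)$ for arbitrary nearby $\mathcal{P}$. This is what the local-minimality theorem of Nazarov, Petrov, Ryabogin and Zvavitch has to handle, and per-mutation bookkeeping cannot supply it.

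A minor point: the Hausdorff distance is not smooth in $T$, so minimizing it in Step~1 yields no first-order orthogonality condition. An $L^2$-type normalization of the vertex displacements would.
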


\begin{proof}
Let $\mathcal{P}$ be a centrally symmetric polytope in the infinitesimal neighborhood of $\mathcal{C}_n$. By Minkowski's Theorem, parallelotopes are the only bodies that minimize the number of facets ($2n$) while maintaining central symmetry. Any perturbation that leaves the affine orbit decomposes topologically into two orthogonal regimes: isometric transverse variations (preserving combinatorics, bounded by purely radial deformations) and combinatorial mutations (isovertex folding and vertex truncation).

\textbf{1. Radial component (isometric perturbation):} 
Suppose $\mathcal{P}$ preserves the combinatorial structure of $\mathcal{C}_n$, with its vertices undergoing purely radial displacements $\delta \mathbf{R}$. We decompose the radial deformation space into the orthogonal direct sum $\mathbb{R}^N = \text{span}\{\mathbf{1}\} \oplus \mathbf{1}^\perp$. Let $\delta \mathbf{R}_\perp \in \mathbf{1}^\perp$ be the transverse projection onto the orthogonal complement of the uniform scaling orbit, with Euclidean norm $\|\delta \mathbf{R}_\perp\| = \delta$. By the KKT stationarity conditions, the gradient is zero ($\nabla \mathcal{V}_M = \mathbf{0}$). The Taylor expansion of the functional yields:
\begin{equation}
\Delta \mathcal{V}_M = \frac{1}{2} \delta \mathbf{R}_\perp^T \mathcal{H} \, \delta \mathbf{R}_\perp + \mathcal{O}(\delta^3).
\end{equation}
Since the radial Hessian operator $\mathcal{H}$ is a strictly positive Graph Laplacian on the subspace $\mathbf{1}^\perp$ (Theorem \ref{thm:hessiano_mahler}), possessing a smallest non-zero eigenvalue $\lambda_{1}(n) > 0$ (the algebraic connectivity or Fiedler value), for a sufficiently small $\epsilon_0$ we deduce the strict lower bound:
\begin{equation}
\Delta \mathcal{V}_M \ge \frac{\lambda_{1}(n)}{4} \delta^2 \equiv \kappa_1 \delta^2.
\end{equation}

\textbf{2. Combinatorial mutation component (folding and truncation):} 
While generic polyhedral perturbations can exhibit complex local topological degeneracies, the leading-order volumetric effect of any local combinatorial mutation is accounted for by two fundamental structural mechanisms: isovertex folding (breaking the coplanarity of existing faces) and vertex truncation (introducing new vertices), with higher-codimension transitions contributing only to higher-order terms $o(\epsilon)$. 

First, for \textit{isovertex folding}, as demonstrated in Appendix \ref{apendice:tangencial}, the triangulation of the non-simplicial cubic facets of $\mathcal{C}_n$ induces a first-order primal volumetric expansion, while the dual metric contracts purely at $\mathcal{O}(\epsilon^2)$. For a given generic folding direction, this injects a non-negative linear term $\kappa_{\text{fold}} \epsilon$ into the functional.

Second, for \textit{vertex truncation}, cutting a vertex of $\mathcal{C}_n$ at depth $\epsilon$ removes a primal corner region of volume $\mathcal{O}(\epsilon^n)$. Crucially, by polar duality, this cut introduces a new vertex in the orthoplex $\mathcal{O}_n$. Because the facets of $\mathcal{O}_n$ are exactly simplices, the addition of this dual vertex strictly expands the polar volume by a pyramid of order $\mathcal{O}(\epsilon)$, contributing a dominant linear term $\mathrm{vol}(\mathcal{C}_n) k_d \epsilon$ (where $k_d > 0$).

Since the first variation of the volume is a linear operator, the contributions of these discrete mechanisms are additive at leading order, with geometric cross-interferences strictly relegated to $\mathcal{O}(\epsilon^2)$. Evaluating the Mahler functional over any combined topological mutation yields:
\begin{equation}
\Delta \mathcal{V}_M = \left( \kappa_{\text{fold}} + \mathrm{vol}(\mathcal{C}_n) \, k_d \right) \epsilon - \mathrm{vol}(\mathcal{O}_n) \, k_p \, \epsilon^n + \mathcal{O}(\epsilon^2),
\end{equation}
where $\kappa_{\text{fold}} \ge 0$ and $k_d \ge 0$, with at least one being strictly positive for any true combinatorial mutation outside the affine orbit, as guaranteed by the strict positivity of the generic tangential variation (Appendix \ref{apendice:tangencial}) and the non-vanishing volume of the dual polar pyramid. 

To guarantee uniform strict positivity, we invoke combinatorial finiteness and compactness. In a sufficiently small Hausdorff neighborhood, the number of introduced vertices is uniformly bounded due to the local semicontinuity of the polyhedral structure, limiting the perturbed boundaries to a finite set of combinatorial types. For each asymmetric combinatorial type, the first-order volumetric variation depends continuously on the normalized perturbation direction within a compact unit sphere. Since a continuous positive function on a compact set attains a strictly positive minimum, and the number of combinatorial types is finite, the global infimum of the active linear coefficients is strictly bounded away from zero. Therefore, there exists a uniform constant $\kappa_2 > 0$ such that the linear term rigidly dominates the primal contraction for all $n \ge 2$:
\begin{equation}
\Delta \mathcal{V}_M \ge \kappa_2 \epsilon.
\end{equation}

\textbf{3. Synthesis of Quantitative Stability:} 
The total geometric deviation $\mathcal{D}(\mathcal{P}, \mathcal{C}_n)$ in the Hausdorff metric modulo affine equivalence is locally bounded by a linear combination of the radial transverse perturbation $\delta$ and the combinatorial mutation depth $\epsilon$, such that $\mathcal{D} \le C(n) (\delta + \epsilon)$ for a constant $C(n) > 0$. Coupling both regimes yields the initial bound:
\begin{equation}
\Delta \mathcal{V}_M \ge \kappa_1 \delta^2 + \kappa_2 \epsilon.
\end{equation}
In the infinitesimal regime ($\epsilon < 1$), the linear term strictly bounds the quadratic one ($\epsilon \ge \epsilon^2$), allowing us to safely weaken the bound to match polynomial degrees:
\begin{equation}
\Delta \mathcal{V}_M \ge \kappa_1 \delta^2 + \kappa_2 \epsilon^2 \ge \min(\kappa_1, \kappa_2) (\delta^2 + \epsilon^2).
\end{equation}
Employing the standard algebraic inequality $(\delta^2 + \epsilon^2) \ge \frac{1}{2}(\delta + \epsilon)^2$, we establish the final strictly quadratic lower bound with respect to the total deviation:
\begin{equation}
\Delta \mathcal{V}_M \ge \frac{\min(\kappa_1, \kappa_2)}{2 C(n)^2} \mathcal{D}(\mathcal{P}, \mathcal{C}_n)^2 \equiv \kappa \mathcal{D}(\mathcal{P}, \mathcal{C}_n)^2.
\end{equation}
This conclusively establishes the global polyhedral stability of $\mathcal{C}_n$. By duality $\mathcal{V}_M(\mathcal{P}) = \mathcal{V}_M(\mathcal{P}^\circ)$, the result extends identically to the orthoplex $\mathcal{O}_n$.
\end{proof}

\subsection*{Asymptotic Implications and Dynamic Perspectives}

The analytical proof of local minimality through the discrete parametric formalism reveals the asymptotic behavior of the Mahler infimum: the reciprocal polar volume exhibits a convexity that strictly penalizes any attempt at continuous regularization (smoothing) of the boundary.

It is imperative to recognize that while our first- and second-order analysis exhaustively characterizes the Hanner orbit as a strict and topologically isolated local minimum, the space of polytopes modulo $GL(n, \mathbb{R})$ contains combinatorial families of high complexity (e.g., asymmetric zonotopes, spherical lattices) that do not derive from simple radial perturbation or truncation. Ruling out the existence of additional local minima in this space transcends the topological scope of a strictly local variational analysis.

Therefore, the definitive resolution of global uniqueness requires transitioning from variational statics to metric differential dynamics. The KKT stationarity condition derived in this article ($R_i^2 C_i \propto S_i^\circ$) provides the analytical architecture for the definition of a \textit{discrete gradient flow} (a combinatorial analog of inverse curvature flows). A dynamical system guided by this gradient could continuously and deterministically deform any generic polytope, monotonically reducing the functional $\mathcal{V}_M$ until asymptotically converging on the Hanner orbit, resolving the conjecture globally.

\begin{remark}[Topological Obstruction to Smoothing and Symplectic Rigidity]
While in the class of smooth-boundary convex bodies ($C^\infty$) classical variational calculus governs the maximization of the functional toward the Euclidean ellipsoid (Blaschke-Santaló inequality \cite{Santalo1949}), the discrete metric of polyhedral vertices penalizes this behavior in the neighborhood of the infimum. Our analytical framework formalizes that the combinatorial asymmetry of the Hanner orbit constitutes a strict and topologically isolated local minimum: any deformation aimed at approximating the primal boundary to a smooth variety (via iterative truncation or addition of facets) induces an expansion in the polar domain that analytically dominates the primal volumetric reduction.

This local topological isolation finds its profound equivalent in the theory of symplectic capacities of the phase space $\mathbb{R}^{2n}$. As Artstein-Avidan, Karasev, and Ostrover \cite{artstein2014} demonstrated, the Mahler infimum is intrinsically linked to the invariance of the Hofer-Zehnder capacity, which remains rigid at $c_{HZ}(K \times K^\circ) = 4$ for every centrally symmetric body. Such symplectic rigidity derives from the analytical existence of 2-rebound Minkowski billiard orbits. In our discrete analytical formulation, these critical orbits map directly to the orthogonality restriction required by the KKT stationarity condition on parametric antipodal pairs.

Consequently, the inability of continuous flows to descend below the Mahler bound (the symplectic obstruction) emerges as the global manifestation of the analytical dominance dictated by our stationary operator on the discrete boundary. The polyhedral infimum constitutes an invariant state against any regularizing perturbation, both in local combinatorics and global symplectic geometry.
\end{remark}

\subsection{Global asymptotic lower bound and the local minimality regime}

The positive definiteness of the functional $\mathcal{V}_M$ in the neighborhood of the Hanner orbit, demonstrated through the spectrum of the discrete Hessian, characterizes the analytical behavior of the polyhedral infimum against infinitesimal perturbations $\epsilon \to 0^+$. However, to describe the space of polytopes modulo $GL(n, \mathbb{R})$ in its entirety, it is necessary to anchor this local regime to the large-scale asymptotic convergence of the functional.

For macroscopic deformations that transcend the local parametric regime (where dual linear term dominance is no longer guaranteed by Taylor approximations), functional decrease is universally bounded by the fundamental theorem of Bourgain and Milman \cite{BourgainMilman1987}. Using local Banach space theory and measure concentration in random Euclidean sections, they showed that for every centrally symmetric convex body $K \in \mathcal{K}_0^n$, the functional satisfies an insurmountable bound:
\begin{equation}
    \mathcal{V}_M(K) = \mathrm{vol}(K)\mathrm{vol}(K^\circ) \ge c^n (\mathrm{vol}(B_n))^2
\end{equation}
where $c > 0$ is a universal constant independent of dimension. Geometrically, this inequality imposes a strict asymptotic barrier scaling as $\sim \tilde{c}^n \frac{4^n}{n!}$ across the entire space of isomorphic configurations.

The conjunction of our discrete parametric framework and the Bourgain-Milman theorem provides an integral description of the Mahler operator. While global geometric theory prevents the functional's asymptotic collapse in the depths of $\mathcal{K}_0^n$, the KKT stationarity condition formulated in the present work characterizes the exact topology of the sink in the infimum's neighborhood. It is analytically proven that the intrinsic balance between the primal transverse gradient ($C_i$) and the dual facet metric ($S_i^\circ$) seals this lower limit, consolidating the Hanner orbit as a strict and asymptotically impregnable local minimum in the hyperspace of shapes.

\section{Conclusions}

This article establishes a discrete parametric characterization for the Mahler functional in the space of polytopes modulo $GL(n, \mathbb{R})$. By dispensing with smooth variety approximations ($C^\infty$) required by classical differential calculus, we show that the functional's topology in the Hanner orbit's neighborhood is rigorously determined by KKT stationarity conditions, evaluated directly in the polyhedral parametric stratum.

Spectral evaluation of the discrete Graph Laplacian, coupled with the analytical asymmetry of isovertex folding and the Truncation Lemma, proves that the Hanner orbit constitutes a strict and topologically isolated local minimum within the class of centrally symmetric polytopes. Furthermore, this approach has allowed for the derivation of a quadratic quantitative stability bound that mathematically guarantees the hypercube $\mathcal{C}_n$'s isolation against both local radial deformations and complex combinatorial mutations in the Hausdorff metric. The deduced analytical identity reveals that the hyperspace dimension governs the exact balance between the primal volumetric gradient and the Euclidean metric of the dual facet.

While the resolution of Mahler's Conjecture in the entirety of the $\mathcal{K}_0^n$ space remains an open challenge, the results presented here rigorously establish the local minimality of known equality cases against arbitrary local polyhedral perturbations. This leaves open the macroscopic exploration of the functional, particularly global convergence across distant combinatorial classes. Ultimately, this discrete parametric framework not only quantitatively characterizes the Hanner orbit structure but provides the indispensable metric substrate for the future development of discrete gradient flows that could connect these isolated local minima with the functional's global structure.

\appendix

\section{Analysis of Tangential Perturbations and Non-Smoothness}
\label{apendice:tangencial}

In the proof of the main minimality theorem (Theorem~\ref{thm:minimo-local}), the strict local minimality of \(\mathcal{C}_n\) is established by combining the radial Hessian (positive semi-definite Graph Laplacian) with the Truncation Lemma. For completeness, we briefly address pure tangential displacements that preserve the number of vertices.

Consider a perturbation of the reference vertex \(v_1=(1,\dots,1)\in\mathcal{C}_n\) of the form
\[
v_1(\epsilon)=v_1+\epsilon\mathbf{t},\qquad\langle v_1,\mathbf{t}\rangle=0,\quad\|\mathbf{t}\|=1,
\]
with the antipodal vertex moved symmetrically. For \(n\ge 3\), the faces of \(\mathcal{C}_n\) incident to \(v_1\) are not simplices. Consequently, a generic tangential displacement immediately destroys coplanarity and changes the combinatorial type: each non-simplicial face folds into a union of simplices.

The volume of the resulting polytope therefore acquires a first-order contribution coming from the signed volumes of the folding pyramids. One has, for a generic tangential direction \(\mathbf{t}\),
\[
\mathrm{vol}(\mathcal{P}_\epsilon)=\mathrm{vol}(\mathcal{C}_n)+c(\mathbf{t})|\epsilon|+\mathcal{O}(\epsilon^2),\qquad c(\mathbf{t})>0.
\]
(The functional of volume is consequently not differentiable at \(\mathcal{C}_n\).) 

Under polar duality, the dual volume remains smoother: the facets of the orthoplex are simplices and the first variation of their areas vanishes by central symmetry, so that
\[
\mathrm{vol}(\mathcal{P}_\epsilon^\circ)=\mathrm{vol}(\mathcal{O}_n)+\mathcal{O}(\epsilon^2).
\]
The product of Mahler therefore inherits a strictly positive linear term. Evaluating the functional yields:
\[
\Delta\mathcal{V}_M=\mathcal{V}_M(\mathcal{P}_\epsilon)-\mathcal{V}_M(\mathcal{C}_n) = c(\mathbf{t}) \cdot \mathrm{vol}(\mathcal{O}_n)|\epsilon| + \mathcal{O}(\epsilon^2).
\]
Therefore, for each generic direction, there exists a constant \(c'(\mathbf{t}) > 0\) such that \(\Delta\mathcal{V}_M \ge c'(\mathbf{t})|\epsilon| > 0\) for all sufficiently small \(\epsilon\neq 0\). Thus, every generic pure tangential displacement that leaves the affine orbit of \(\mathcal{C}_n\) strictly increases the functional.

This first-order combinatorial effect is already covered, in a quantitatively analogous form, by the Truncation Lemma. The present remark merely makes the non-smooth character of the volume functional at the hypercube explicit, confirming that it does not create descent directions for \(\mathcal{V}_M\).

\subsection*{Future Perspectives}

The asymptotic isolation and strict convexity of the radial Hessian matrix proved for the hypercube $\mathcal{C}_n$ (and by duality, for the orthoplex $\mathcal{O}_n$) suggest the natural extension of this variational formalism toward open problems in asymptotic convex geometry. We identify two analytical trajectories where the present parametric framework provides a new operative base:

\textbf{1. The full class of Hanner polytopes.}
While $\mathcal{C}_n$ and $\mathcal{O}_n$ represent extreme configurations generated by pure operations (Cartesian products and one-dimensional direct sums, respectively), the Hanner orbit exhibits mixed combinatorial structures. Our primal-dual variational balance ($R_i^2 C_i \propto S_i^\circ$) suggests that for an arbitrary Hanner polytope, the KKT stationarity condition equation system will be algebraically partitioned into invariant blocks corresponding to its decomposition factors. Evaluating the second variation over these mixed configurations would translate into a Hessian matrix with block-diagonal structure. Proving its positive definiteness would open a purely algebraic path, independent of continuous regularizations, to analytically verify that the entire Hanner orbit constitutes a strict and topologically isolated local minimum.

\textbf{2. Mahler's Conjecture in the asymmetric regime.}
Extending this method to the class of convex bodies without central symmetry (where the conjectured infimum is the simplex $\Delta_n$) requires a generalization of the KKT stationarity conditions. By dispensing with central symmetry, antipodal vertices no longer operate as a geometric constraint. Furthermore, minimization of the functional in this regime requires the center of polarity to topologically coincide with the Santaló point of the primal body. In our discrete formulation, this centering condition would be incorporated by introducing a vector of Lagrange multipliers penalizing dual barycenter variations. The dominance of the dual linear term $\mathcal{O}(\epsilon)$ over primal volumetric contraction $\mathcal{O}(\epsilon^n)$ demonstrated in this work provides an analytical precedent: the extreme asymmetric geometry of the simplex $\Delta_n$ could be characterized as the asymptotic stationary state of a discrete gradient flow restricted by Santaló conditions, providing a deterministic alternative to probabilistic methods typical of asymptotic analysis.

\end{document}